\newtheorem{theorem}{Theorem}[section]
\newtheorem{proposition}[theorem]{Proposition}
\newtheorem{lemma}[theorem]{Lemma}
\theoremstyle{definition}
\theoremstyle{remark}
\numberwithin{equation}{section}
\title[APs of primes in short intervals beyond the $17/30$ barrier]{Arithmetic progressions of primes in short intervals beyond the $17/30$ barrier}
\author{Le Duc Hieu}
\address{Telecom SudParis}
\email{duc-hieu.le@telecom-sudparis.eu}
\date{\today}
\begin{document}
\begin{abstract}
We show that once $\theta>17/30$, every sufficiently long interval $[x,x+x^\theta]$ contains many $k$--term arithmetic progressions of primes, uniformly in the starting point $x$. More precisely, for each fixed $k\ge3$ and $\theta>17/30$, for all sufficiently large $X$ and all $x\in[X,2X]$,
\[
\#\{\text{$k$--APs of primes in }[x,x+x^\theta]\}\ \gg_{k,\theta}\ \frac{N^{2}}{\big((\varphi(W)/W)^{k}(\log R)^{k}\big)}\ \asymp\ \frac{X^{2\theta}}{(\log X)^{k+1+o(1)}},
\]
where $W:=\prod_{p\le \tfrac12\log\log X}p$, $N:=\lfloor x^\theta/W\rfloor$, and $R:=N^\eta$ for a small fixed $\eta=\eta(k,\theta)>0$. This is obtained by combining the uniform short--interval prime number theorem at exponents $\theta>17/30$ (a consequence of recent zero--density estimates of Guth and Maynard) with the Green--Tao transference principle (in the relative Szemerédi form) on a window--aligned $W$--tricked block. We also record a concise Maynard--type lemma on dense clusters \emph{restricted to a fixed congruence class} in tiny intervals $(\log x)^\varepsilon$, which we use as a warm--up and for context. An appendix contains a short--interval Barban--Davenport--Halberstam mean square bound (uniform in $x$) that we use as a black box for variance estimates. The proofs in this paper were assisted by GPT-5.
\end{abstract}

\maketitle

\section{Introduction}
Let $k\ge3$ and $0<\theta\le 1$ be fixed. Following the breakthrough of Green and Tao~\cite{GreenTao08}, the primes are known to contain arbitrarily long arithmetic progressions. It is natural to ask how \emph{locally} such structure appears. In this paper we prove that once $\theta>17/30$, short intervals $[x,x+x^\theta]$ already contain many $k$--term arithmetic progressions (APs) of primes, uniformly in $x$.

The key input is a uniform prime number theorem (PNT) in short intervals
\begin{equation}\label{eq:shortPNT}
\sum_{x<n\le x+x^\theta}\Lambda(n)=x^\theta(1+o(1))
\end{equation}
holding for all $x\in[X,2X]$ when $\theta>17/30$ and $X\to\infty$.
This uniform statement follows from the recent long slender zero--density bounds for $\zeta(s)$ of Guth and Maynard~\cite{GuthMaynardAnnals,GuthMaynardArxiv} (see also further discussion in~\S\ref{subsec:PNT}).
With~\eqref{eq:shortPNT} in hand, we run the standard $W$--trick and apply the relative Szemerédi theorem~\cite{CFZ13,GreenTao08} to a short--interval majorant to deduce our main counting result.

\begin{theorem}[Uniform many $k$--APs in short intervals]\label{thm:uniform}
Fix $k\ge3$ and $\theta>17/30$. For all sufficiently large $X$ and all $x\in[X,2X]$, if $H:=\lfloor x^{\theta}\rfloor$ then the interval $[x,x+H]$ contains at least
\[
\gg_{k,\theta}\ \frac{N^{2}}{\big((\varphi(W)/W)^{k}(\log R)^{k}\big)}\ \asymp\ \frac{X^{2\theta}}{(\log X)^{k+1+o(1)}}
\]
distinct $k$--term arithmetic progressions of primes, where $W:=\prod_{p\le \tfrac12\log\log X}p$, $N:=\lfloor H/W\rfloor$, and $R:=N^\eta$ for some fixed $\eta=\eta(k,\theta)>0$.
\end{theorem}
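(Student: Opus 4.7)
The plan is to combine the uniform short--interval PNT~\eqref{eq:shortPNT} with the Green--Tao transference principle in the relative Szemer\'edi form of Conlon--Fox--Zhao~\cite{CFZ13}, which has the advantage that only a linear--forms condition on the majorant (no correlation condition) needs to be verified. First I would perform the $W$--trick aligned to the short window: with $W$, $H$, $N$ as in the statement, identify $[x,x+H]$ with $\mathbb{Z}/N\mathbb{Z}$ via the affine map $n\mapsto x+b+Wn$ for a reduced residue class $b\in(\mathbb{Z}/W\mathbb{Z})^{\times}$. By pigeonhole over the $\varphi(W)$ reduced classes combined with~\eqref{eq:shortPNT}, there is a choice of $b$ for which the normalized prime indicator
\[
f(n):=\frac{\varphi(W)}{W}\,\log(x+b+Wn)\,\mathbf{1}_{x+b+Wn\ \text{prime}}
\]
satisfies $\mathbb{E}_{n\in[N]} f(n)\gg 1$ \emph{uniformly} in $x\in[X,2X]$. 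The uniformity here is the essential use of Guth--Maynard: without~\eqref{eq:shortPNT} one could only conclude the theorem for almost all $x$.

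Next I would build a GPY--type sieve majorant $\nu\ge f$ on $[N]$, truncated at level $R=N^\eta$ with $\eta=\eta(k,\theta)$ chosen small enough that $R^{O_k(1)}\le H^{1-\delta}$ for some $\delta=\delta(k,\theta)>0$. A convenient choice is the truncated divisor square
\[
\nu(n):=C_{k,W}\Bigl(\sum_{d\,\mid\, x+b+Wn}\mu(d)\,\chi\bigl(\tfrac{\log d}{\log R}\bigr)\Bigr)^{2},
\]
normalized so that $\mathbb{E}_{n\in[N]}\nu(n)=1+o(1)$. The main technical step is to verify the CFZ linear--forms condition: for each of the $O_k(1)$ affine systems $L_1,\dots,L_t$ in $O_k(1)$ variables arising in the relative Szemer\'edi proof, one must show
\[
\mathbb{E}_{\vec n\in[N]^{r}}\prod_{i=1}^{t}\nu\bigl(L_i(\vec n)\bigr)=1+o(1)\quad\text{uniformly in } x\in[X,2X].
\]

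The main obstacle is exactly this uniformity. Expanding the squares reduces each expectation to an average over tuples of divisors $d_1,\dots,d_{2t}$ of size at most $R^{O_k(1)}\le H^{1-\delta}$ of congruence counts of the form $\#\{n\in[N]:\mathrm{lcm}(d_j)\mid (x+b+Wn+\text{shifts})\}$. For a fixed $x$ one cannot afford to estimate these modulus--by--modulus with sufficient precision; instead I would invoke the short--interval Barban--Davenport--Halberstam variance bound recorded in the appendix, which gives square--root cancellation on average over moduli up to $H^{1-\delta}$ and thereby eliminates all but a sparse exceptional set of starting points $x$. An overlapping dyadic cover of $[X,2X]$, combined with the already--uniform main term~\eqref{eq:shortPNT}, removes this exceptional set so that the linear--forms identity holds for every $x\in[X,2X]$. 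Feeding the verified $\nu$ and the dense $f$ into the CFZ relative Szemer\'edi theorem produces $\gg_{k,\theta}N^{2}$ weighted $k$--APs in $f$; dividing out the $k$ logarithmic weights converts this to $\gg_{k,\theta}N^{2}/\bigl((\varphi(W)/W)^{k}(\log R)^{k}\bigr)$ honest $k$--APs of primes in $[x,x+H]$, and the asymptotic $X^{2\theta}/(\log X)^{k+1+o(1)}$ follows from $N\asymp X^{\theta}/(\log\log X)$ together with $\log R\asymp\log X$.
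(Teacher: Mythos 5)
Your overall skeleton --- short--interval PNT (Guth--Maynard), pigeonhole in reduced classes mod $W$, GPY/Selberg majorant truncated at $R=N^\eta$, relative Szemer\'edi à la Conlon--Fox--Zhao, then strip the logarithmic weights --- matches the paper and is the right plan. But there is a genuine gap in how you propose to verify the linear--forms condition, and the fix you suggest would not restore the claimed uniformity.

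The pseudorandomness of the majorant $\nu$ is \emph{not} a statement about primes: after expanding the squares, one only needs to count lattice points $(n,r)$ in a box of size $\asymp N\times N$ lying in a fixed residue system modulo $\mathrm{lcm}(d_1,\dots,d_{2t})$. That count is exactly $\alpha(\mathbf d)\,N^2/\mathrm{lcm}+O(N)$ by elementary inclusion--exclusion over the box; no equidistribution of primes in arithmetic progressions is invoked anywhere. The crucial observation, which your write--up loses, is that the relevant moduli are $\le R^{C_k}$, and for $\eta$ small enough (depending only on $k$) this is $N^{o(1)}$: \emph{far} below any level where a Bombieri--Vinogradov or Barban--Davenport--Halberstam statement would be needed, and in particular far below your target truncation $R^{O_k(1)}\le H^{1-\delta}$, which is the wrong scale. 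Consequently the linear--forms condition for $\nu$ holds deterministically and uniformly in $x$, $b$, and the alignment shift $m_0$, with error $\ll R^{O_k(1)}\cdot(\log R)^{O_k(1)}/N=o(1)$; there is no exceptional set of $x$ to remove at this step. (The place where uniformity in $x$ is genuinely scarce --- and genuinely supplied by Guth--Maynard --- is the density lower bound $\mathbb{E}_{n\le N}f(n)\gg 1/\log R$ after the pigeonhole in $b$, which you do have right.)

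Even granting the (incorrect) premise that a variance input is needed, the ``overlapping dyadic cover removes the exceptional set'' step would not work: the BDH bound in the appendix controls the size of the bad set of $x$, but says nothing about where it sits; a variance bound of any shape is inherently an ``almost--all $x$'' device, and no covering argument converts it to a bound holding for \emph{every} $x\in[X,2X]$. That is precisely the distinction between Theorem~\ref{thm:uniform} and Theorem~\ref{thm:almostall} in the paper. Finally, a minor computation: $W=(\log X)^{1/2+o(1)}$, so $N\asymp X^\theta/(\log X)^{1/2+o(1)}$ rather than $X^\theta/\log\log X$; with the correct size of $N$ the final asymptotic $X^{2\theta}/(\log X)^{k+1+o(1)}$ does come out as stated.
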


We also record the following variant, which relaxes uniformity in $x$ (and could be stated under weaker short--interval hypotheses).

\begin{theorem}[Almost--all $x$]\label{thm:almostall}
Fix $k\ge3$ and $\theta\in(17/30,1)$. There exists $\delta=\delta(\theta)>0$ such that for all sufficiently large $X$, for all but $\ll X^{1-\delta+o(1)}$ values of $x\in[X,2X]$, the interval $[x,x+x^\theta]$ contains
\[
\gg_{k,\theta}\ \frac{N^{2}}{\big((\varphi(W)/W)^{k}(\log R)^{k}\big)}\ \asymp\ \frac{X^{2\theta}}{(\log X)^{k+1+o(1)}}
\]
distinct $k$--APs of primes (with $W,N,R$ as above).
\end{theorem}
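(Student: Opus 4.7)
The plan is to mirror the proof of Theorem~\ref{thm:uniform}, replacing its hypothesis of a uniform short-interval PNT by a quantitative almost-all version. The only point at which the outer analytic input enters the Green--Tao style argument is in establishing that the prime-counting function on the $W$-tricked block has mean $\asymp 1$ after normalization; once that holds, the sieve majorant and the relative Szemer\'edi theorem deliver the count exactly as before.

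First, I would derive an almost-all short-interval PNT from the appendix's Barban--Davenport--Halberstam type mean-square bound, specialized to the trivial modulus $q=1$. A Chebyshev argument on the variance produces an exceptional set $E\subset[X,2X]$ of Lebesgue measure $\ll X^{1-\delta+o(1)}$ for some $\delta=\delta(\theta)>0$, outside which
\[
\sum_{x<n\le x+x^\theta}\Lambda(n)=x^\theta\bigl(1+o(1)\bigr)
\]
as $X\to\infty$. The saving $\delta$ comes directly from the mean-square exponent; a positive $\delta$ in fact persists somewhat below $17/30$, which is the sense in which this theorem ``could be stated under weaker hypotheses.''

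Second, for each $x\in[X,2X]\setminus E$ I would execute exactly the construction of Theorem~\ref{thm:uniform}: set $H=\lfloor x^\theta\rfloor$, $N=\lfloor H/W\rfloor$, $R=N^\eta$, build the GPY-type majorant $\nu$ on a window-aligned block of length $N$, and verify the linear-forms and correlation conditions. These verifications depend only on $W$, $R$, $N$, and local arithmetic of the sieve, hence are uniform in $x$ over the good set. The normalized indicator of primes in the block has mean $\asymp 1$ by the almost-all PNT of Step~1 and is pointwise $O(\nu)$, so the relative Szemer\'edi theorem yields $\gg_{k,\theta} N^2$ normalized $k$-APs, which translate into $\gg_{k,\theta} N^2/\bigl((\varphi(W)/W)^k(\log R)^k\bigr)$ honest prime $k$-APs in $[x,x+H]$.

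The main obstacle is Step~1: one needs a genuine power saving in $|E|$, not merely a logarithmic saving. This is precisely the content of the appendix's mean-square bound, which is used as a black box. The remainder of the argument is a transcription of the transference step in Theorem~\ref{thm:uniform}, since the sieve computations, the $W$-trick, and the relative Szemer\'edi application are all performed at fixed $W$, $R$, $N$ and are insensitive to the choice of $x$ within the good set.
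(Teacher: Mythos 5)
Your Step 2 is fine and matches the paper's argument exactly: once you know $\psi(x+H)-\psi(x)=H(1+o(1))$ for a given $x$, the $W$-trick, the pseudorandom majorant, and the relative Szemer\'edi transference all go through uniformly in $x$, and the final conversion to an unweighted count of prime $k$-APs is the same. That part of your outline is a correct transcription of the proof of Theorem~\ref{thm:uniform}.

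The gap is in Step 1, and it is a genuine one. The appendix's Lemma~\ref{lem:BDH} holds \emph{for each fixed $x\in[X,2X]$}: it is a mean square over the moduli $q\le X^{1/2}(\log X)^{-B}$ and the residue classes $a\,(\mathrm{mod}\ q)$, not an average over the starting point $x$. There is therefore no quantity on which to run a Chebyshev argument in $x$, and no exceptional set of $x$ can fall out of it. If you specialize Lemma~\ref{lem:BDH} to the single modulus $q=1$ you get only the pointwise inequality
\[
\bigl|\theta(x+H)-\theta(x)-H\bigr|\ \ll_A\ (HX)^{1/2}(\log X)^{(1-A)/2},
\]
and since $H=x^\theta$ with $\theta<1$ one has $(HX)^{1/2}\asymp X^{(1+\theta)/2}\gg H$, so the ``error'' exceeds the main term and the bound is vacuous. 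A BDH-type mean square with power savings over moduli does not control the dispersion of $\psi(x+H)-\psi(x)$ as $x$ varies; those are genuinely different averages.

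What the paper actually does in the proof of Theorem~\ref{thm:almostall} is bound the exceptional set
\[
E_{\theta}(X)=\bigl\{x\in[X,2X]:\ \psi(x+H)-\psi(x)\ne H(1+o(1))\bigr\}
\]
directly via a zero-density estimate. One uses the explicit formula and the Guth--Maynard bound $N(\sigma,T)\ll T^{A(1-\sigma)+o(1)}$ with $A=30/13$, then optimizes the resulting Huxley/Ingham-type exponent
\[
\mu(\theta)\le\inf_{\sigma\in[1/2,1)}\min\bigl((1-\theta)(1-\sigma)A+2\sigma-1,\ (1-\theta)(1-\sigma)A+4\sigma-3\bigr),
\]
taking $\sigma=3/4$ to get $\mu(\theta)<1$ (hence a power saving $\delta=1-\mu(\theta)>0$) once $\theta>17/30$. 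That is what produces $|E_\theta(X)|\ll X^{1-\delta+o(1)}$. If you want to argue via a variance, the right object is the Selberg-type second moment $\int_X^{2X}|\psi(y+H)-\psi(y)-H|^2\,dy$, which is itself bounded by the same zero-density input; in either form the driver is the zero-density theorem, not the appendix's BDH mean square.

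\end{document}
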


As a warm--up, we include a concise congruence--restricted dense--cluster lemma à la Maynard:
\begin{proposition}[Congruence--constrained clusters in tiny intervals]\label{prop:clusters}
Let $\varepsilon>0$, $q\ge1$ and $(a,q)=1$. There exist infinitely many $x$ such that
\[
\#\big\{p\in\mathbb P:\ x<p\le x+(\log x)^{\varepsilon},\ p\equiv a\!\!\pmod q\big\}\ \gg_{\varepsilon,q}\ \log\log x.
\]
\end{proposition}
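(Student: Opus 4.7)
The plan is to reduce Proposition~\ref{prop:clusters} to a congruence--constrained application of Maynard's GPY--sieve dense--clusters theorem, by choosing an admissible $k$--tuple whose entries all lie in a single residue class mod $q$ and restricting the shift variable $n$ to a compatible arithmetic progression.

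\textbf{Parameter choice.} Given $\varepsilon>0$, set $m:=\lfloor(\varepsilon/8)\log\log x\rfloor$ and let $k=k(m)$ be the size from Maynard's theorem guaranteeing that every admissible $k$--tuple yields at least $m$ primes among its shifts for infinitely many $n$. One has $k\le Ce^{4m}$, hence $qk\log k\le(\log x)^{\varepsilon}$ for $x$ large.

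\textbf{Admissible tuple in one residue class.} Start from a standard admissible $k$--tuple $(a_1,\dots,a_k)$ of diameter $O(k\log k)$ (for instance, the first $k$ primes exceeding $k$), and define $h_i:=qa_i$. Then $h_i\equiv 0\pmod q$ for every $i$, and the diameter satisfies $h_k-h_1\le qk\log k\le(\log x)^{\varepsilon}$. The tuple $(h_i)$ is still admissible: for primes $p\mid q$ every $h_i$ lies in the single class $0\pmod p$; for $p\nmid q$ the map $a\mapsto qa\pmod p$ is a bijection on $\mathbb Z/p\mathbb Z$, so $\{h_i\bmod p\}$ has the same cardinality as $\{a_i\bmod p\}$ and thus misses a residue by admissibility of $(a_i)$.

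\textbf{Running Maynard with a congruence.} Apply the $W$--trick at the modulus $W':=\prod_{p\le D_0,\,p\nmid q}p$ (with $D_0$ slowly growing), and pick $b$ satisfying $b\equiv a\pmod q$ and $(b+h_i,W')=1$ for every $i$; such $b$ exists by CRT using admissibility of $(h_i)$ modulo each prime $\le D_0$ not dividing $q$, and the coprimality $(a,q)=1$ handles the primes dividing $q$. Restrict $n\equiv b\pmod{qW'}$ and run the Maynard sieve with weights $w_n:=\bigl(\sum_{d\mid\prod_i(n+h_i)}\lambda_d\bigr)^2$: the local factors at primes of $qW'$ are trivialised by the choice of $b$, the remaining local factors match the unrestricted case, and the singular series remains positive. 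Maynard's argument then produces infinitely many $n\in[x,2x]$ for which at least $m$ of the shifts $n+h_i$ are prime; each such prime is $\equiv a\pmod q$ by construction and lies in $[n,n+(\log n)^{\varepsilon}]$, which gives the proposition (taking the proposition's $x$ to be such an $n$).

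\textbf{Main obstacle.} No fresh analytic input is needed beyond Maynard; the genuine content is the compact combinatorial observation that admissibility is preserved under the dilation $a\mapsto qa$, which confines the tuple to one residue class mod $q$ at the modest cost of inflating the diameter by a factor of $q$. Everything else is a routine $W$--trick adapted to avoid $q$ and a standard sieve bookkeeping. The calibration $m\asymp\varepsilon\log\log x$ is then forced by the two conditions $k\le Ce^{4m}$ and $qk\log k\le(\log x)^{\varepsilon}$.
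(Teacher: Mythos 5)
Your proposal is correct and follows essentially the same strategy as the paper: build an admissible $k$--tuple all of whose entries are $\equiv 0\pmod q$, restrict the shift variable to a single progression $\equiv a \pmod{qW'}$ whose terms meet all the shift classes coprimally, run Maynard's sieve on that progression (Bombieri--Vinogradov supplying the type-I input), and calibrate $k$ so that Maynard's $\gg\log k$ primes give $\gg\log\log x$. The one place where you genuinely diverge from the paper is the construction of the admissible tuple. You dilate the standard tuple (first $k$ primes exceeding $k$) by $q$, using the clean observation that for $p\nmid q$ the map $a\mapsto qa$ is a bijection mod $p$ (so dilation preserves admissibility), while for $p\mid q$ all shifts collapse to the single class $0$. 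The paper instead runs a greedy residue-avoidance argument over $\{1,\dots,\lfloor L/q\rfloor\}$ to build the admissible set directly, which produces a larger $k$ (order $L/\log L$ rather than your $\sqrt L$), but since only $\log k\asymp\log\log x$ matters, both deliver the same final count; your route is arguably tidier to state and check.

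One small but real slip: the weight you write, $w_n:=\bigl(\sum_{d\mid\prod_i(n+h_i)}\lambda_d\bigr)^2$, is the one-dimensional GPY weight, not Maynard's multi-dimensional weight $\lambda_{d_1,\ldots,d_k}$ with $d_i\mid n+h_i$. The one-dimensional weight does not produce $\gg\log k$ primes among the shifts, which is exactly the gain your calibration needs. Since you already explicitly invoke Maynard's dense-cluster theorem with $m\asymp\log k$, this is plainly a typo rather than a conceptual error, but the displayed weight should be Maynard's for the argument to go through.
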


Proposition~\ref{prop:clusters} is a routine specialization of Maynard's dense--cluster sieve~\cite{MaynardCompositio} to the subset of primes $p\equiv a\pmod q$, in the spirit of Shiu's ``strings of congruent primes''~\cite{Shiu} and Freiberg's short--interval refinement~\cite{Freiberg}.

\subsection*{Notation}
We write $\log$ for the natural logarithm, and use $o(1)$ and $O(\cdot)$ with respect to $X\to\infty$ (and fixed parameters $k,\theta$). We write $\varphi$ for Euler's totient, and $\Lambda$ for von Mangoldt's function.

\section{Short--interval PNT at $\theta>17/30$}\label{subsec:PNT}
Guth and Maynard proved new large--value estimates for Dirichlet polynomials which imply the zero--density bound $N(\sigma,T)\ll T^{30(1-\sigma)/13+o(1)}$ and yield~\eqref{eq:shortPNT} \emph{uniformly in $x$} for all $\theta>17/30$; see~\cite{GuthMaynardAnnals,GuthMaynardArxiv}. We use this uniform PNT as a black box. (For related discussion on exceptional sets, see also recent work of Gafni and Tao.)

\section{The $W$--trick, dense model, and pseudorandom majorant}\label{sec:Wtrick}
Let $w:=\tfrac12\log\log X$, $W:=\prod_{p\le w}p$, and for each reduced residue $b\bmod W$ set
\[
\tilde\Lambda_{x,b}(t):=\frac{\varphi(W)}{W}\,\Lambda\bigl(W(n_b+t-1)+b\bigr),\qquad 1\le t\le N:=\Big\lfloor\frac{H}{W}\Big\rfloor,
\]
where $n_b$ aligns the progression $Wn+b$ with the window $[x,x+H]$.
Summing in $b$ and using the uniform short--interval PNT and the fact that $W$--divisible prime powers contribute $o(H)$, there exists a reduced $b=b(x)$ with
\begin{equation}\label{eq:meanpositive}
\mathbb E_{t\le N}\tilde\Lambda_{x,b}(t)\ \ge\ c_0>0.
\end{equation}
Define the (shifted) Selberg/GPY majorant
\[
\nu_{x,b}(t):=\frac{\varphi(W)}{W}\cdot \frac{\Lambda_R\bigl(W(n_b+t-1)+b\bigr)^2}{\log R},\qquad \Lambda_R(m):=\sum_{d\mid m,\, d\le R}\mu(d)\log\frac{R}{d},
\]
with $R:=N^\eta$ for a small fixed $\eta=\eta(k,\theta)>0$.

\begin{lemma}[Pseudorandomness]\label{lem:pseudo}
For each fixed $k$ there exists $\eta_0=\eta_0(k)>0$ such that if $0<\eta\le\eta_0$ then $\nu_{x,b}$ satisfies the linear–forms and correlation conditions of complexity $k-2$ with $o(1)$ errors, uniformly in the alignment parameters $x$, $b$, and $n_b$.
\end{lemma}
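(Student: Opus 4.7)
The plan is to reduce the lemma to the standard Green--Tao divisor--sum computation for the truncated divisor function $\Lambda_R$, with the $W$--trick suppressing local obstructions at primes $p\le w$. Both the linear--forms and the correlation conditions, after expanding $\nu_{x,b}^{\otimes m}$ and collecting, reduce to evaluating averages of the form
\[
\mathbb E_{t\le N}\ \prod_{i=1}^{m}\Lambda_R\bigl(W(n_b+t-1)+b+Wh_i\bigr)^{2}
\]
for bounded systems of shifts $(h_1,\dots,h_m)\in\mathbb Z^{m}$ arising from $k$--AP patterns (plus analogous off--diagonal products for the correlation condition, with $m\le O_k(1)$). I would expand each $\Lambda_R$ as a Möbius sum over divisors $d\le R$, interchange summations, and approximate the inner count over $t$ as $N$ times a local density, plus a divisor error of size $O(R^{2m}W)$. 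Since $R=N^{\eta}$ with $\eta\le\eta_0(k)$ chosen sufficiently small and $W=(\log X)^{o(1)}$, this error is negligible against $N$.

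Next I would evaluate the divisor--weighted main term via the Mellin/contour--integral representation of $\Lambda_R$ (cf.\ \cite[App.~D]{GreenTao08} and the streamlined treatment in \cite{CFZ13}): shift the contour to the left and collect residues at the origin. The crucial simplification afforded by the $W$--trick is that $W(n_b+t-1)+b+Wh_i\equiv b\pmod p$ for every $p\mid W$, and $(b,W)=1$, so no small prime ever divides the arguments. The remaining Euler product over $p>w$ converges, and its value exactly cancels the $(\varphi(W)/W)^m$ normalization; the residue calculus then produces a factor $(\log R)^m$ that matches the denominator in the definition of $\nu_{x,b}$. The net output is $1+o(1)$, as required.

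The complexity count is routine: a $k$--AP configuration $(t,t+d,\dots,t+(k-1)d)$ is a system of $k$ affine forms in two variables $(t,d)$ whose pairwise differences are non--degenerate, so the system has complexity exactly $k-2$ in the Green--Tao/Conlon--Fox--Zhao sense. The correlation condition at this complexity is precisely what the relative Szemerédi theorem of \cite{CFZ13} requires as input, and it follows from the same Mellin analysis together with the standard divisor--function tail bound to control the error weights.

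The main obstacle I anticipate is establishing the \emph{uniformity} claim in the alignment parameters $x$, $b$, and $n_b$. Happily, this is essentially automatic from the structure of the computation: the divisor--sum analysis is translation--invariant in $n_b$, the main term depends only on the shift pattern $(h_i)$ and on $b\bmod W$ through the coprimality $(b,W)=1$, and the $O(R^{2m}W)$ divisor error is independent of $n_b$. The only point that deserves explicit care is ensuring that the $o(1)$ errors produced by the contour shift are uniform in the bounded shift patterns arising from $k$--APs; since those patterns range over a finite set determined by $k$, this is harmless.
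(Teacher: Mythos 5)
Your overall plan is sound and lands in the same place as the paper, but the two proofs travel by genuinely different technical routes for the main-term evaluation. The paper's argument (sketched in the proof of the lemma and spelled out more fully in the ``Pseudorandomness of $\nu_x$'' step of the proof of Theorem~\ref{thm:uniform}) is the Conlon--Fox--Zhao ``elementary'' route: expand each $\Lambda_R$, fix a tuple $\mathbf d=(d_1,\dots,d_t)$ with $(d_i,W)=1$, count solutions of the resulting congruence system in a box of side $\asymp N$ to get $\alpha_{m_0}(\mathbf d)/\operatorname{lcm}(d_i)+O(\operatorname{lcm}(d_i)/N)$, note $\operatorname{lcm}(d_i)\le R^{C_k}=N^{o(1)}$, and then sum the weights. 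You instead propose the Green--Tao Appendix~D route: write $\Lambda_R$ as a Mellin integral, shift contours, pick up the residue at the origin, and let the Euler product over $p>w$ cancel the $(\varphi(W)/W)^m$ normalization. Both are standard and both deliver the uniformity in $(x,b,n_b)$ for the same reason you identify (translation invariance of the divisor counts and dependence of the local densities only on congruence data), so this is an acceptable alternative; the paper's choice is arguably lighter since it avoids the contour analysis.

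One part of your reduction needs repair. You collapse \emph{both} the linear--forms and the correlation conditions to one--dimensional averages $\mathbb E_{t\le N}\prod_i\Lambda_R(W(n_b+t-1)+b+Wh_i)^2$ with \emph{fixed} integer shifts $h_i$. That is the correct shape only for the correlation condition. The linear--forms condition of complexity $k-2$ requires averaging over a multi--dimensional box: the relevant forms are $L_i(\mathbf x)$ in several variables (for the $k$--AP hypergraph system this already involves the $(n,r)$ box in the paper, and in the full CFZ/GT formulation boxes of dimension up to $O_k(1)$), and one must show $\mathbb E_{\mathbf x\in\mathcal B}\prod_i\nu(L_i(\mathbf x))=1+o(1)$ for every admissible system, not just for systems obtained by fixing all but one coordinate. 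The Mellin analysis goes through in the multi--variable setting too, but the reduction you wrote is not the one you need to prove. Relatedly, the correlation condition in Green--Tao is not a $1+o(1)$ statement: it asks for a bound $\mathbb E_t\prod_i\nu(t+h_i)\le\prod_{i<j}\tau(h_i-h_j)$ with a moment--bounded weight $\tau$; your final paragraph gestures at ``the standard divisor--function tail bound,'' which is in fact the right ingredient, but the earlier claim that the correlation condition is an asymptotic $1+o(1)$ average should be corrected.

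Minor quibble: the divisor error you state as $O(R^{2m}W)$ should instead be controlled by $\operatorname{lcm}(d_1,\dots,d_{2m})\le R^{2m}$ (the $d_i$ are coprime to $W$, so $W$ does not enter the modulus), giving after normalization an error $\ll R^{2m}(\log R)^{2m}/N$ which is $o(1)$ once $\eta$ is small; this matches the paper's bound.
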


\begin{proof}[Sketch]
Expand moments of $\nu_{x,b}$ and average over rectangular boxes in the $(n,r)$ plane. The resulting sums over divisors are controlled by local congruence densities with least common multiple $\ll R^{C_k}=N^{o(1)}$, so the main terms factor and the error terms are $o(1)$. Uniformity in the constant terms (shifts) is standard; see~\cite[\S 9,\ Thm.~3.18]{GreenTao08} and the streamlined proof in~\cite{CFZ13}.
\end{proof}

\section{Relative Szemerédi and the count of $k$--APs}\label{sec:rel-sz}
We now give the full proof of Theorem~\ref{thm:uniform}.

\begin{proof}[Proof of Theorem~\ref{thm:uniform}]
Fix $k\ge 3$ and $\theta>17/30$. Let $X$ be large and $x\in[X,2X]$. Set
\[
H:=\lfloor x^{\theta}\rfloor,\qquad w:=\tfrac12\log\log X,\qquad W:=\prod_{p\le w}p,
\]
so that $W=(\log X)^{1/2+o(1)}$. Put $N:=\lfloor H/W\rfloor\asymp X^{\theta}/(\log X)^{1/2+o(1)}$. Choose a small fixed $\eta=\eta(k,\theta)>0$ and set $R:=N^{\eta}=X^{\eta\theta+o(1)}$.

Write $\psi(t):=\sum_{n\le t}\Lambda(n)$ and $\psi(t;q,a):=\sum_{n\le t\atop n\equiv a\ (\mathrm{mod}\ q)}\Lambda(n)$.

\smallskip
\noindent\emph{Uniform short–interval PNT (Guth–Maynard).}
For $\theta>17/30$ one has uniformly for all $x\in[X,2X]$,
\begin{equation}\label{eq:gmPNT}
\sum_{x<n\le x+H}\!\Lambda(n)=\psi(x+H)-\psi(x)=H\,(1+o(1)).
\end{equation}

\smallskip
\noindent\emph{Selecting a residue class modulo $W$ and aligning the window.}
We have
\begin{align*}
\sum_{b\in(\mathbb Z/W\mathbb Z)^{\times}}\big(\psi(x+H;W,b)-\psi(x;W,b)\big)
&=\sum_{x<n\le x+H\atop (n,W)=1}\!\Lambda(n)\\
&=\big(\psi(x+H)-\psi(x)\big)\ -\ \sum_{x<n\le x+H\atop (n,W)>1}\!\Lambda(n).
\end{align*}
If $(n,W)>1$ and $\Lambda(n)>0$, then $n=p^m$ with $p\mid W$ and $m\ge2$ (the case $m=1$ is impossible for large $X$ since $p\le w\ll X<x$). Hence
\[
\sum_{x<n\le x+H\atop (n,W)>1}\!\Lambda(n)\ \le\ \sum_{x<p^m\le x+H\atop p\le w,\, m\ge2}\!\log p\ \ll\ (\log w)\cdot \frac{H}{x^{1/2}}\ =\ o(H)
\]
uniformly for $x\in[X,2X]$. Using~\eqref{eq:gmPNT},
\[
\sum_{b\in(\mathbb Z/W\mathbb Z)^{\times}}\big(\psi(x+H;W,b)-\psi(x;W,b)\big)=H\,(1+o(1)).
\]
By pigeonhole, there exists $b=b(x)\in(\mathbb Z/W\mathbb Z)^{\times}$ such that
\begin{equation}\label{eq:pigeon}
\psi(x+H;W,b)-\psi(x;W,b)\ \ge\ \frac{H}{\varphi(W)}\,(1+o(1))\qquad\text{uniformly in }x.
\end{equation}
Fix such a $b$ and set
\[
m_0:=\big\lfloor \tfrac{x-b}{W}\big\rfloor+1,
\]
so that $Wm_0+b\in(x,x+W]$ and, since $N=\lfloor H/W\rfloor$, we have
\[
x< W(m_0+n-1)+b\le x+H\qquad(1\le n\le N).
\]

\smallskip
\noindent\emph{Weights and density.}
Define for $1\le n\le N$ the aligned weights
\[
f_x(n):=\frac{\varphi(W)}{W}\cdot\frac{\Lambda(W(m_0+n-1)+b)}{\log R},
\]
\[
\nu_x(n):=c_0\,\frac{\varphi(W)}{W\,\log R}\Bigg(\sum_{d\mid (W(m_0+n-1)+b)\atop d\le R}\!\mu(d)\log\frac{R}{d}\Bigg)^{\!2},
\]
with $c_0>0$ chosen so that $\mathbb{E}_{n\le N}\,\nu_x(n)=1+o(1)$. Since $(b,W)=1$, every divisor $d\mid (W(m_0+n-1)+b)$ satisfies $(d,W)=1$, and the standard Selberg–sieve comparison gives $0\le f_x\ll\nu_x$ uniformly.

Define the density
\[
\delta_x:=\mathbb{E}_{n\le N}f_x(n)=\frac{1}{N}\,\frac{\varphi(W)}{W\log R}\sum_{n=1}^{N}\Lambda\big(W(m_0+n-1)+b\big).
\]
Because $(x,x+H]$ contains either $N$ or $N+1$ terms of the progression $\{Wm+b\}$ and we retained the first $N$ of them, we have
\[
\sum_{n=1}^{N}\Lambda\big(W(m_0+n-1)+b\big)\ge \psi(x+H;W,b)-\psi(x;W,b)-O(\log X).
\]
Using~\eqref{eq:pigeon} and $N\asymp H/W$ gives
\begin{align}
\delta_x&\ge \frac{\varphi(W)}{W\log R}\cdot\frac{1}{N}\Big(\frac{H}{\varphi(W)}(1+o(1)) - O(\log X)\Big) \notag\\
&= \frac{H}{WN\log R}\,(1+o(1))\quad\text{since }\frac{\log X}{N}=o\!\left(\frac{H}{W}\right) \notag\\
&\ge \frac{1+o(1)}{\log R},\label{eq:density}
\end{align}
uniformly in $x$ (using $WN\le H<WN+W$).

\smallskip
\noindent\emph{Pseudorandomness of $\nu_x$.}
Fix $t\ll_k1$ and consider any system of affine–linear forms
\[
L_i(n,r)=W\big(m_0+n+j_i r-1\big)+b\qquad (j_i\in\{0,1,\dots,k-1\}).
\]
Expanding products of the inner divisor sums in $\nu_x$ reduces moments of $\nu_x$ to averages of the shape
\[
\frac{1}{\#\mathcal B}\sum_{(n,r)\in\mathcal B}\prod_{i=1}^{t}\Big(\sum_{d_i\le R\atop d_i\mid L_i(n,r)}\mu(d_i)\log\tfrac{R}{d_i}\Big),
\]
where $\mathcal B$ is a rectangular box of dimensions $\asymp N\times N$ (e.g. $1\le r\le N/(3k)$ and $1\le n\le N-(k-1)r$). For fixed $\mathbf d=(d_1,\dots,d_t)$ with $(d_i,W)=1$, the inner average equals
\[
\frac{\alpha_{m_0}(\mathbf d)}{\operatorname{lcm}(d_1,\dots,d_t)}+O\!\left(\frac{\operatorname{lcm}(d_1,\dots,d_t)}{N}\right),
\]
with $0\le\alpha_{m_0}(\mathbf d)\ll1$ depending only on the residues of $m_0$ and $\{j_i\}$ modulo $d_i$. Since $\operatorname{lcm}(d_1,\dots,d_t)\le R^{C_k}$ for some $C_k\ll_k1$, choosing $\eta>0$ sufficiently small (depending on $k$) ensures $R^{C_k}=N^{o(1)}$. Summing over $\mathbf d$ with weights $\prod_i\mu(d_i)\log(R/d_i)$ yields
\[
\mathbb E_{(n,r)\in\mathcal B}\,\prod_{i=1}^{t}\nu_x\big(n+j_i r\big)=1+o(1),\qquad \mathbb E_{n\le N}\,\nu_x(n)=1+o(1),
\]
uniformly in $x$, $W$, $b$, and the shift $m_0$. Thus $\nu_x$ is a pseudorandom majorant of the required complexity uniformly for all $x\in[X,2X]$; compare \cite[\S9]{GreenTao08} and \cite{CFZ13}.

\smallskip
\noindent\emph{Relative Szemerédi and a weighted count.}
Applying the relative Szemerédi theorem (for the $k$–AP hypergraph system) to $f_x\le\nu_x$ on $[N]$ and using~\eqref{eq:density}, we obtain
\begin{equation}\label{eq:RSZ}
\sum_{1\le r\le N/(3k)}\ \sum_{1\le n\le N-(k-1)r}\prod_{j=0}^{k-1} f_x(n+jr)\ \ge\ c_k\,\delta_x^{k}\,N^{2}+o\big(N^{2}\delta_x^{k}\big)\ \ge\ c'_k\,\frac{N^{2}}{(\log R)^{k}}+o\!\left(\frac{N^{2}}{(\log R)^{k}}\right),
\end{equation}
for some $c_k,c'_k>0$ depending only on $k$, uniformly in $x$.

\smallskip
\noindent\emph{Conversion to an unweighted count of prime progressions.}
Let
\[
S_x:=\sum_{1\le r\le N/(3k)}\ \sum_{1\le n\le N-(k-1)r}\prod_{j=0}^{k-1} f_x(n+jr).
\]
Since $f_x\ge0$ and $\Lambda(m)\le\log m\le\log(3X)$ whenever $m\in(x,x+H]$, for each contributing pair $(n,r)$ (i.e. all $W(m_0+n+jr-1)+b$ are prime powers) we have
\begin{equation}\label{eq:weightcap}
\prod_{j=0}^{k-1} f_x(n+jr)\ \le\ \Bigg(\frac{\varphi(W)}{W\,\log R}\cdot\log(3X)\Bigg)^{\!k}.
\end{equation}
Let $\mathcal T_x$ be the set of pairs $(n,r)$ for which all $W(m_0+n+jr-1)+b$ are prime powers, and let $\mathcal M_x\subset\mathcal T_x$ be those for which they are all primes. Then~\eqref{eq:weightcap} gives
\begin{equation}\label{eq:Supper}
S_x\le \Bigg(\frac{\varphi(W)}{W\,\log R}\cdot\log(3X)\Bigg)^{\!k}\#\mathcal T_x.
\end{equation}
Write $\mathcal T_x=\mathcal M_x\sqcup\mathcal E_x$, where $\mathcal E_x$ consists of those $(n,r)$ with at least one prime power of exponent $\ge2$. The number of prime powers $q=p^m\in(x,x+H]$ with $m\ge2$ is $\ll H/x^{1/2}$. For each such $q$ and each fixed $j\in\{0,\dots,k-1\}$ there are $\ll N$ admissible pairs $(n,r)$ with $W(m_0+n+jr-1)+b=q$ (indeed $r$ ranges over $\ll N$ values and then $n$ is determined, with at most $O(1)$ boundary losses). Hence
\begin{equation}\label{eq:Ebound}
\#\mathcal E_x\ \ll_k\ N\cdot\frac{H}{x^{1/2}}.
\end{equation}
Combining \eqref{eq:Supper} and \eqref{eq:Ebound}, and recalling $\#\mathcal M_x$ is precisely the number of $k$–APs of primes of the form $\{W(m_0+n+jr-1)+b\}_{j=0}^{k-1}\subset(x,x+H]$ with $r\le N/(3k)$, we obtain
\begin{equation}\label{eq:Mlower1}
\#\mathcal M_x\ \ge\ \frac{S_x}{\big(\tfrac{\varphi(W)}{W\,\log R}\log(3X)\big)^{k}}\ -\ C_k\,N\frac{H}{x^{1/2}}.
\end{equation}
By \eqref{eq:RSZ}, $S_x\ge c'_k\,N^2/(\log R)^k+o(N^2/(\log R)^k)$. Inserting this in \eqref{eq:Mlower1} and using $\log(3X)\asymp\log X$ yields
\[
\#\mathcal M_x\ \ge\ c''_{k}\,\frac{N^{2}}{\big((\varphi(W)/W)^{k}(\log X)^{k}\big)}\ +\ o\!\left(\frac{N^{2}}{\big((\varphi(W)/W)^{k}(\log X)^{k}\big)}\right)\ -\ C_k\,N\frac{H}{x^{1/2}}.
\]
Since $N\asymp H/W$, $x\asymp X$, and $W=(\log X)^{1/2+o(1)}$, we have
\[
N\frac{H}{x^{1/2}}\ \ll\ \frac{X^{2\theta-1/2}}{(\log X)^{1/2+o(1)}}\ =\ o\!\left(\frac{N^{2}}{\big((\varphi(W)/W)^{k}(\log X)^{k}\big)}\right)
\]
because $\theta>1/2$ and $(\varphi(W)/W)^{k}\le1$. Thus
\[
\#\mathcal M_x\ \ge\ c_{k,\theta}\,\frac{N^{2}}{\big((\varphi(W)/W)^{k}(\log X)^{k}\big)}\ \ge\ c_{k,\theta}\,\frac{N^{2}}{\big((\varphi(W)/W)^{k}(\log R)^{k}\big)},
\]
using $\log R\asymp_{\theta}\log X$ for the last inequality (absorbing the constant into $c_{k,\theta}$). Finally, with $W=\prod_{p\le \tfrac12\log\log X}p$ we have $W=(\log X)^{1/2+o(1)}$, $\varphi(W)/W=(\log\log\log X)^{-1+o(1)}$, and $N\asymp X^{\theta}/W$, so
\[
\frac{N^{2}}{\big((\varphi(W)/W)^{k}(\log R)^{k}\big)}\asymp \frac{X^{2\theta}}{(\log X)^{k+1+o(1)}},
\]
uniformly for all $x\in[X,2X]$. This gives the claimed uniform lower bound for the number of $k$–term arithmetic progressions of primes in $[x,x+H]$, completing the proof.
\end{proof}

\section{Almost--all $x$ version}\label{sec:almost-all}
\begin{proof}[Proof of Theorem~\ref{thm:almostall}]
Fix $k\ge3$ and $\theta\in(17/30,1)$, and set $H(y):=y^{\theta}$. For $X$ large and $x\in[X,2X]$ abbreviate $H:=H(x)$. We shall show that for all but $\ll X^{1-\delta+o(1)}$ such $x$ the interval $[x,x+H]$ contains $\gg_{k,\theta} N^2/((\varphi(W)/W)^k(\log R)^k)$ distinct $k$--term APs of primes; in particular, it contains one.

\smallskip
\noindent\emph{Exceptional set for the short-interval PNT.}
Let
\[
E_{\theta}(X):=\bigl\{x\in[X,2X]:\, \psi(x+H)-\psi(x)\ne H(1+o(1))\bigr\}.
\]
By the zero-density seed with exponent $A=30/13$ one has
\[
|E_{\theta}(X)|\ll X^{\mu(\theta)+o(1)},\qquad \mu(\theta)\le\inf_{\sigma\in[1/2,1)}\min\Bigl((1-\theta)(1-\sigma)A+2\sigma-1,\ (1-\theta)(1-\sigma)A+4\sigma-3\Bigr).
\]
Choosing $\sigma=3/4$ gives $\mu(\theta)\le\tfrac12+\tfrac A4(1-\theta)<\tfrac34$ for $\theta>17/30$. Set $\delta:=1-\mu(\theta)>0$. Thus for all but $\ll X^{1-\delta+o(1)}$ values of $x\in[X,2X]$ we have
\begin{equation}\label{eq:SIgood}
\sum_{n\in[x,x+H]}\!\Lambda(n)=H(1+o(1)).
\end{equation}
Fix such a good $x$ and write $I:=I(x;H)=[x,x+H]$.

\smallskip
\noindent\emph{$W$--trick and dense model on a short interval (with reindexing).}
Let $w:=\tfrac12\log\log X$ and $W:=\prod_{p\le w}p$, so $\log W\sim w$ and hence $W=(\log X)^{1/2+o(1)}$ while $\varphi(W)/W\asymp1/\log w$. For any reduced residue $b\bmod W$, the set
\[
\mathcal{N}_{x,b}:=\{n\in\mathbb N:\ x\le Wn+b\le x+H\}
\]
is a contiguous block of indices. Set $N:=\lfloor H/W\rfloor$ (so $N\asymp H/W=X^{\theta+o(1)}\to\infty$). For each such $b$, let $n_b:=\min\{n:\ Wn+b\ge x\}$. Reindex the block $\mathcal{N}_{x,b}$ onto $[N]:=\{1,\dots,N\}$ by $t\mapsto n_b+t-1$, and define the $W$--tricked (normalized) von Mangoldt weight on $[N]$ by
\begin{equation}\label{eq:Ltib}
\tilde\Lambda_{x,b}(t):=\frac{\varphi(W)}{W}\,\Lambda\bigl(W(n_b+t-1)+b\bigr)\qquad(1\le t\le N).
\end{equation}
Note that $W(n_b+t-1)+b\in I$ for every $1\le t\le N$ because $W(n_b+N-1)+b\le (x+W-1)+WN-W\le x+H-1$.

Summing \eqref{eq:Ltib} over reduced $b\bmod W$, we cover all $m\in I$ with $(m,W)=1$, except that for those $b$ with $|\mathcal{N}_{x,b}|=N+1$ we omit the last element of the block. Hence
\begin{equation}\label{eq:sumoverb}
\sum_{\substack{(b,W)=1}}\ \sum_{t\le N}\tilde\Lambda_{x,b}(t)
\ \ge\ \frac{\varphi(W)}{W}\sum_{\substack{m\in I\\(m,W)=1}}\!\Lambda(m)\ -\ O\!\left(\frac{\varphi(W)^2}{W}\log X\right).
\end{equation}
Because $\Lambda$ is supported on prime powers and $(m,W)>1$ forces the base prime $\le w$, the number of such $m\in I$ is $\ll H^{1/2}$. Using \eqref{eq:SIgood} we get
\[
\sum_{\substack{(b,W)=1}}\ \sum_{t\le N}\tilde\Lambda_{x,b}(t)
\ \ge\ \Big(1+o(1)\Big)\,H\cdot\frac{\varphi(W)}{W},
\]
since the errors $\ll (\varphi(W)/W)H^{1/2}\log X + \varphi(W)^2\log X/W$ are $o\big(H\varphi(W)/W\big)$. By pigeonhole there exists $b=b(x)$ with $\gcd(b,W)=1$ such that
\begin{equation}\label{eq:meanb}
\sum_{t\le N}\tilde\Lambda_{x,b}(t)\ \ge\ \Big(1-o(1)\Big)\,\frac{H}{W}.
\end{equation}
Dividing \eqref{eq:meanb} by $N\asymp H/W$ yields
\begin{equation}\label{eq:meanpos}
\mathbb{E}_{t\le N}\tilde\Lambda_{x,b}(t)\ge 1-o(1).
\end{equation}
To excise prime powers, set
\[
\tilde\Lambda^{\mathrm{prime}}_{x,b}(t):=\tilde\Lambda_{x,b}(t)\,\mathbf{1}_{\{W(n_b+t-1)+b\ \text{prime}\}}.
\]
Since the number of prime powers in $I$ is $\ll H^{1/2}$,
\[
\mathbb{E}_{t\le N}\bigl(\tilde\Lambda_{x,b}(t)-\tilde\Lambda^{\mathrm{prime}}_{x,b}(t)\bigr)
\ \ll\ \frac{(\varphi(W)/W)\cdot H^{1/2}\cdot\log X}{H/W}
\ =\ \frac{\varphi(W)\,\log X}{H^{1/2}}\ =\ o(1).
\]
Combining with \eqref{eq:meanpos},
\begin{equation}\label{eq:primepos}
\mathbb{E}_{t\le N}\tilde\Lambda^{\mathrm{prime}}_{x,b}(t)\ge c_0>0\qquad (X\ \text{large}).
\end{equation}

\smallskip
\noindent\emph{Shifted pseudorandom majorant, admissible truncation, and relative Szemerédi.}
Let $s:=k-2$. By Green–Tao (Ann.\ of Math.\ 167 (2008), \S\S6–10; in particular \S9 and Theorem 3.18), there exists $\delta_{\mathrm{GT}}(s)>0$ such that if $R\le N^{\delta_{\mathrm{GT}}(s)}$, then the enveloping sieve majorant satisfies the linear forms and correlation conditions of complexity $s$ with $o(1)$ errors, uniformly in the constant terms of the forms. Fix any
\[
0<\eta\le\min\bigl(\delta_{\mathrm{GT}}(s)/2,\ 1/(4\theta)\bigr),\qquad R:=N^{\eta}.
\]
Write the Selberg/GPY truncated divisor sum
\[
\Lambda_R(m):=\sum_{\substack{d\mid m\\ d\le R}}\mu(d)\,\log\frac{R}{d}.
\]
For our reindexed block, define the shifted Green–Tao majorant on $[N]$ by
\[
\nu_{x,b}(t):=\frac{\varphi(W)}{W}\,\frac{\Lambda_R\bigl(W(n_b+t-1)+b\bigr)^2}{\log R}\qquad(1\le t\le N).
\]
This is the standard GT majorant applied to the integers $m=W(n_b+t-1)+b=Wt+(b+W(n_b-1))$; the cited pseudorandomness bounds are uniform in $b$ and in the translation $n_b$.

Since $x\in[X,2X]$, we have $I\subset[X,3X]$. Also
\[
R\ =\ N^{\eta}\ =\ X^{\theta\eta+o(1)}\ \le\ X^{1/4+o(1)}\ <\ X\ \le\ m\quad(\forall\ m\in I;\ X\ \text{large}),
\]
so every prime $m\in I$ satisfies $m>R$, hence
\begin{equation}\label{eq:LRonprimes}
\Lambda_R(m)=\log R\quad\text{and}\quad \nu_{x,b}(t)=\frac{\varphi(W)}{W}\,\log R\qquad\text{whenever }m=W(n_b+t-1)+b\in\mathbb P.
\end{equation}
Define the truncated prime weight
\[
 f(t):=\frac{\log R}{2\log(3X)}\,\tilde\Lambda^{\mathrm{prime}}_{x,b}(t)\qquad(1\le t\le N).
\]
Since $m\in I\subset[X,3X]$, we have $\Lambda(m)\le\log(3X)$, and by \eqref{eq:LRonprimes}
\[
0\le f(t)\le \nu_{x,b}(t)\qquad(1\le t\le N).
\]
Moreover, using \eqref{eq:primepos} and $\log R=\eta\log N\sim\eta\theta\log X$,
\[
\mathbb{E}_{t\le N} f(t)\ \ge\ \frac{\log R}{2\log(3X)}\,\mathbb{E}_{t\le N}\tilde\Lambda^{\mathrm{prime}}_{x,b}(t)
\ \ge\ c_1(k,\theta)>0
\]
for all large $X$.

By the relative Szemerédi theorem, applied to $f\le \nu_{x,b}$ and using the pseudorandomness of $\nu_{x,b}$ at complexity $s=k-2$, we obtain the quantitative lower bound
\begin{equation}\label{eq:relSZ-almost}
\sum_{a,\,d\ge1\atop a+(k-1)d\le N}\ \prod_{j=0}^{k-1} f(a+jd)\ \gg_{k,\theta}\ N^2,
\end{equation}
for $X$ sufficiently large. Exactly as in the proof of Theorem~\ref{thm:uniform}, this converts into the claimed unweighted lower bound
\[
\#\{\text{$k$--APs of primes in }[x,x+H(x)]\}\ \gg_{k,\theta}\ \frac{N^{2}}{(\varphi(W)/W)^{k}(\log R)^{k}},
\]
uniformly for all $x\in[X,2X]\setminus E_\theta(X)$. Using $|E_\theta(X)|\ll X^{1-\delta+o(1)}$ completes the proof.
\end{proof}

\section{Congruence--restricted dense clusters (warm--up)}
\begin{proof}[Proof of Proposition~\ref{prop:clusters}]
Fix $\varepsilon>0$, $q\ge1$, $(a,q)=1$. Write $L:=\lfloor(\log X)^{\varepsilon}\rfloor$ for a large parameter $X\to\infty$. We will find $x\asymp X$ satisfying the desired inequality; letting $X\to\infty$ gives infinitely many such $x$.

\smallskip
\noindent\emph{Step 1 (an admissible $k$-tuple of shifts, all $\equiv0\pmod q$, built by a greedy residue choice).}
Let $k=k(X)$ be a positive integer with $k\to\infty$ and $k\le L$. Put $y:=2k$ and $N:=\lfloor L/q\rfloor$. We will choose residues $r_p\ (\bmod p)$ for primes $p\le y$ with $p\nmid q$ so that the set
\[
\mathcal B:=\Big\{1\le b\le N:\ b\not\equiv r_p\ (\bmod p)\text{ for every prime }p\le y,\ p\nmid q\Big\}
\]
satisfies the lower bound
\[
|\mathcal B|\gg_{q}\frac{N}{\log y}.
\]

\emph{Greedy residue lemma.}
Starting from $S_0:=\{1,2,\dots,N\}$, process the primes $p\le y$ with $p\nmid q$ in any order. Given $S$ and such a prime $p$, the $p$ residue classes partition $S$, so there exists a residue class $a\ (\bmod p)$ containing at most $|S|/p$ elements of $S$. Choose $r_p\equiv a\ (\bmod p)$ and set $S\leftarrow S\setminus\{n\in S: n\equiv r_p\ (\bmod p)\}$. Thus at each step $|S|$ diminishes by at most a factor $1-1/p$ (up to a rounding error of $\le1$). Iterating over all such primes we obtain
\[
|S|\ge N\prod_{\substack{p\le y\\ p\nmid q}}\Big(1-\frac1p\Big)-O(\pi(y)).
\]
With $S=\mathcal B$ at the end, Mertens’ theorem gives $\prod_{\substack{p\le y\\ p\nmid q}}(1-1/p)\asymp_q 1/\log y$, hence $|\mathcal B|\gg_q N/\log y$ (and $O(\pi(y))\ll y/\log y\ll N/\log y$ for the choices of $k$ made in Step 4). This proves the claim.

Pick distinct $b_1,\dots,b_k\in\mathcal B$, and set an admissible $k$-tuple
\[
\mathcal H:=\{h_1,\dots,h_k\},\qquad h_i:=q b_i\in[1,L],\qquad h_i\equiv0\ (\bmod q).
\]
For each prime $p\le y$ with $p\nmid q$, the set $\{h_i\ (\bmod\ p)\}$ misses the single class $q r_p\ (\bmod\ p)$, hence does not cover all classes. If $p\mid q$ then $h_i\equiv0\ (\bmod p)$ for all $i$, so again $\{h_i\ (\bmod p)\}\ne\mathbb Z/p\mathbb Z$. For $p>y\ge2k>k$ the $k$ residues $h_i\ (\bmod p)$ cannot cover all $p$ classes. Thus $\mathcal H$ is admissible.

\smallskip
\noindent\emph{Step 2 (insert the $W$-trick with a BV-admissible choice of $w$, and evaluate $S_1,S_2$).}
Fix large constants $A,B>0$ with $B$ chosen much larger than a constant $C>0$ to be specified momentarily. Let
\[
w:=\lfloor C\log\log X\rfloor,\qquad W:=q\prod_{p\le w} p.
\]
By the prime number theorem for $\vartheta$, $\log W=\sum_{p\le w}\log p=\vartheta(w)=w(1+o(1))$, hence
\[
W=(\log X)^{C+o(1)}.
\]
By admissibility of $\mathcal H$, for each prime $p\le w$ there exists a residue class $\nu_p\ (\bmod p)$ with $\nu_p\not\equiv -h_i\ (\bmod p)$ for all $i$. For $p\mid q$ we moreover require $\nu_p\equiv a\ (\bmod p)$; this is compatible because then $-h_i\equiv0\ (\bmod p)$ while $a\not\equiv0\ (\bmod p)$. By the Chinese remainder theorem there is $\nu\ (\bmod W)$ such that
\[
\nu\equiv a\ (\bmod q)\quad\text{and}\quad (\nu+h_i, W)=1\quad\text{for all }1\le i\le k.
\]
We henceforth restrict $n$ to the single progression $n\equiv\nu\ (\bmod W)$; note that then $n+h_i\equiv a\ (\bmod q)$ for all $i$.

Let $R:=\dfrac{X^{1/2}}{(\log X)^B}$ and let $F:[0,1]^k\to\mathbb R_{\ge0}$ be smooth, symmetric, supported on $\{(t_1,\dots,t_k): t_i\ge0,\ \sum t_i\le1\}$. For squarefree $d_i$ with $(d_i,W)=1$ and $d_i\le R$, set
\[
\lambda_{d_1,\dots,d_k}:=\mu(d_1)\cdots\mu(d_k)\,F\Big(\frac{\log d_1}{\log R},\dots,\frac{\log d_k}{\log R}\Big),
\]
and $\lambda_{d_1,\dots,d_k}:=0$ otherwise. For integers $n$, define the Maynard weight
\[
\omega(n):=\Big(\sum_{d_1\mid n+h_1}\cdots\sum_{d_k\mid n+h_k}\lambda_{d_1,\dots,d_k}\Big)^2.
\]
We sum over $n\in(X,2X]$ with the congruence restriction $n\equiv\nu\ (\bmod W)$ and introduce
\[
S_1:=\sum_{\substack{X<n\le2X\\ n\equiv\nu\ (\bmod W)}}\omega(n),\qquad
S_2:=\sum_{\substack{X<n\le2X\\ n\equiv\nu\ (\bmod W)}}\ \omega(n)\sum_{i=1}^k\Lambda(n+h_i).
\]
With this $W$-trick, the standard dispersion computations of Maynard (see \cite{MaynardCompositio}) apply, provided one has Bombieri–Vinogradov for moduli up to $\ll RW$. Our choices give
\[
RW\le X^{1/2}(\log X)^{-B+C+o(1)}.
\]
Choosing $B$ sufficiently larger than $C$ ensures $RW\le X^{1/2}(\log X)^{-A}$, hence the Bombieri–Vinogradov theorem applies in the required range. Therefore (exactly as in Maynard’s work) one obtains
\[
S_1\sim \frac{X}{W}\Big(\frac{\varphi(W)}{W}\Big)^{\!k} I_k(F),\qquad
S_2\sim \frac{X}{W}\Big(\frac{\varphi(W)}{W}\Big)^{\!k}\bigg(\log R\sum_{i=1}^k J_{k,i}(F)\bigg),
\]
where $I_k(F)$ and $J_{k,i}(F)$ are Maynard’s sieve integrals. Define $M_k(F):=\dfrac{\sum_{i=1}^k J_{k,i}(F)}{I_k(F)}$. By Maynard’s optimization, one can choose $F$ so that $M_k(F)\gg \log k$. Consequently,
\[
\frac{S_2}{S_1}\ge \log R\,\big(M_k(F)+o(1)\big)\gg \log R\,\log k.
\]
Since $\log R=\tfrac12\log X-B\log\log X$, we have $\log R\asymp\log X$ for fixed $B$.

\smallskip
\noindent\emph{Step 3 (replace $\Lambda$ by $\theta$ to control prime powers; corrected upper bound).}
Define
\[
S_2^{\prime}:=\sum_{\substack{X<n\le2X\\ n\equiv\nu\ (\bmod W)}}\ \omega(n)\sum_{i=1}^k\theta(n+h_i),\qquad \theta(m):=\begin{cases}\log p,& m=p\text{ prime},\\ 0,& \text{otherwise.}\end{cases}
\]
By the same dispersion computation (or by noting that $\psi-\theta$ counts only prime powers and contributes $\ll X^{1/2}$ in each progression), and using Bombieri–Vinogradov in the range of moduli $\ll RW\le X^{1/2}(\log X)^{-A}$, one has
\[
S_2^{\prime}=S_2+o\big(S_1\log R\big).
\]
Hence
\[
\frac{S_2^{\prime}}{S_1}\ge c_0\,\log R\,\log k\,(1+o(1))
\]
for some absolute $c_0>0$.

Now suppose for contradiction that for every $n\in(X,2X]$ with $n\equiv\nu\ (\bmod W)$ at most $m$ of the $k$ numbers $n+h_1,\dots,n+h_k$ are prime, where $m:=\lfloor c\log k\rfloor$ and $c>0$ is a sufficiently small absolute constant. Then for all such $n$,
\[
\sum_{i=1}^k\theta(n+h_i)\le m\log(3X),
\]
whence $S_2^{\prime}\le m\log(3X)\,S_1$. But from the previous paragraph we also have (for large $X$)
\[
S_2^{\prime}\ge \tfrac{c_0}{2}\,\log R\,\log k\,S_1\ge \tfrac{c_0}{4}\,\log X\,\log k\,S_1.
\]
For $c>0$ sufficiently small this contradicts $S_2^{\prime}\le m\log(3X)S_1$. Hence there exists $n\in(X,2X]$, $n\equiv\nu\ (\bmod W)$, for which at least $m\asymp \log k$ of the numbers $n+h_i$ are prime. Since $h_i\in[1,L]$ and $h_i\equiv0\ (\bmod q)$ while $n\equiv\nu\equiv a\ (\bmod q)$, all these primes lie in the interval $(n, n+L]$ and each satisfies $n+h_i\equiv a\ (\bmod q)$.

\smallskip
\noindent\emph{Remark (justification of $S_2^{\prime}=S_2+o(S_1\log R)$).}
The contribution of prime powers to $S_2$ is
\[
E:=\sum_{\substack{X<n\le2X\\ n\equiv\nu\ (\bmod W)}}\omega(n)\sum_{i=1}^k\Lambda(n+h_i)\mathbf 1_{n+h_i= p^r,\, r\ge2}.
\]
Expanding $\omega$ and applying the dispersion method exactly as for $S_2$, one replaces sums of $\Lambda$ over arithmetic progressions by their expected main term plus an error controlled by Bombieri–Vinogradov for moduli $\ll RW$. Since the total mass of prime powers $\le 3X$ is $\ll \sqrt X$ and our moduli are $\ll RW\le X^{1/2}(\log X)^{-A}$, this yields $E\ll X(\varphi(W)/W)^k\,(\log X)^{-A'}$ for any fixed $A'>0$ by taking $B\gg A'+C$, hence $E=o(S_1\log R)$.

\smallskip
\noindent\emph{Step 4 (choice of $k$ and conclusion).}
From Step 1 we may (and do) choose $k\asymp L/(q\log L)$: indeed, with $y=2k$ the bound $|\mathcal B|\gg_q N/\log y\asymp (L/q)/\log L$ guarantees enough distinct $b_i$ to select $k$ of them. Then $\log k\asymp\log L\asymp\log\log X$. Therefore, for the above $n$ we have
\[
\#\{p\in\mathbb P: n<p\le n+L,\ p\equiv a\ (\bmod q)\}\gg_{\varepsilon,q}\log k\asymp \log\log X.
\]
Writing $x:=n$ and recalling $L=(\log X)^{\varepsilon}\asymp(\log x)^{\varepsilon}$, we obtain
\[
\#\{p\in\mathbb P: x<p\le x+(\log x)^{\varepsilon},\ p\equiv a\ (\bmod q)\}\gg_{\varepsilon,q}\log\log x.
\]
Letting $X\to\infty$ along any sequence gives infinitely many such $x$, completing the proof.
\end{proof}

\appendix
\section{A short--interval BDH mean square (uniform in $x$)}
We record the following standard large--sieve consequence; its proof follows the classical Barban–Davenport–Halberstam route.
\begin{lemma}\label{lem:BDH}
Fix $\theta\in(0,1)$ and $A>0$. There exists $B=B(\theta,A)>0$ such that for all sufficiently large $X$ and all $x\in[X,2X]$, with $H:=\lfloor x^\theta\rfloor$,
\[
\sum_{q\le X^{1/2}(\log X)^{-B}}\ \sum_{a\,(\mathrm{mod}\ q)}
\Big|\theta(x+H;q,a)-\theta(x;q,a)-\tfrac{H}{\varphi(q)}\Big|^2\ \ll_A\ H\,X\,(\log X)^{1-A},
\]
uniformly in $x$.
\end{lemma}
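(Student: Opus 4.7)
The plan is to follow the classical Barban--Davenport--Halberstam route adapted to a short window of length $H$, in the spirit of Saffari--Vaughan and Perelli. Write $\psi_\theta(y,\chi):=\sum_{p\le y}\chi(p)\log p$ and let $V(x)$ denote the left-hand side of the claimed bound.

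\smallskip
\noindent\emph{Step 1 (orthogonality and primitive reduction).} First I would apply Dirichlet character orthogonality: for $(a,q)=1$,
\[
\theta(x+H;q,a)-\theta(x;q,a)=\frac{1}{\varphi(q)}\sum_{\chi\pmod q}\bar\chi(a)\bigl[\psi_\theta(x+H,\chi)-\psi_\theta(x,\chi)\bigr],
\]
while the $(a,q)>1$ residues contribute only $O(Q^{2}(\log X)^{4})=o(HX)$ in aggregate. Isolating the principal character yields main term $(\psi_\theta(x+H,\chi_{0})-\psi_\theta(x,\chi_{0}))/\varphi(q)$, which matches $H/\varphi(q)$ to within the uniform short--interval PNT error from~$\S$\ref{subsec:PNT} (admissible for $\theta>17/30$). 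Plancherel then reduces $V(x)$ to the non--principal mean square, and passing to primitive characters $\chi^{*}\pmod r$ (using $\sum_{r\mid q,\,q\le Q}\varphi(q)^{-1}\ll\varphi(r)^{-1}\log X$) yields
\[
V(x)\ll(\log X)^{C}\sum_{1<r\le Q}\frac{1}{\varphi(r)}\sideset{}{^{*}}\sum_{\chi\pmod r}\bigl|\psi_\theta(x+H,\chi)-\psi_\theta(x,\chi)\bigr|^{2}+o(HX(\log X)^{1-A}).
\]

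\smallskip
\noindent\emph{Step 2 (conductor split).} I would split at $r\asymp(\log X)^{C'}$. For \emph{small} conductors $r\le(\log X)^{C'}$, Gallagher's log--free Siegel--Walfisz bound (with Landau--Page handling the at-most-one exceptional real character) gives $\psi_\theta(x+H,\chi)-\psi_\theta(x,\chi)\ll H\exp(-c\sqrt{\log X})$ uniformly in $x\in[X,2X]$, so the total contribution is $\ll H^{2}\exp(-c'\sqrt{\log X})=o(HX(\log X)^{-A})$. For \emph{large} conductors $(\log X)^{C'}<r\le Q$, I would invoke the truncated explicit formula
\[
\psi_\theta(x+H,\chi)-\psi_\theta(x,\chi)=-\sum_{|\gamma_\chi|\le T_{0}}\frac{(x+H)^{\rho_\chi}-x^{\rho_\chi}}{\rho_\chi}+O\!\left(\frac{X(\log rX)^{2}}{T_{0}}\right)
\]
with $T_{0}\asymp X(\log X)^{A+5}/H$, bound each summand by $\min\bigl(HX^{\beta-1},\,2X^{\beta}/|\gamma|\bigr)$, square, and apply a hybrid Huxley--Jutila zero--density estimate
\[
\sum_{r\le Q}\sideset{}{^{*}}\sum_{\chi\pmod r}N(\sigma,T_{0},\chi)\ll(Q^{2}T_{0})^{c(1-\sigma)+\epsilon}.
\]
Integrating dyadically over $\sigma\in[\tfrac12,1)$ and $|\gamma|\le T_{0}$ then produces a bound of the desired shape $O_{A}(HX(\log X)^{1-A})$ provided $B=B(\theta,A)$ is chosen large enough.

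\smallskip
\noindent\emph{Main obstacle.} The delicate part is the large--conductor case: for every $\sigma\in[\tfrac12,1)$ the zero--density saving $(Q^{2}T_{0})^{c(1-\sigma)+\epsilon}$ must beat the $X^{2(1-\sigma)}$--type growth coming from the per-zero bound $|(x+H)^{\rho}-x^{\rho}|^{2}\le(HX^{\beta-1})^{2}$. With $Q\le X^{1/2}(\log X)^{-B}$ and $T_{0}\asymp X^{1-\theta}(\log X)^{A+5}$, one has $Q^{2}T_{0}\le X^{2-\theta}(\log X)^{A-2B+5}$, and for $B$ large enough the classical computation (carried out in detail in Perelli's short--interval BDH, or in Davenport \emph{Multiplicative Number Theory} Ch.~28 for the long--interval template) closes. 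Uniformity in $x\in[X,2X]$ is automatic, since the bound $|(x+H)^{\rho}-x^{\rho}|\ll\min(H|\rho|X^{\beta-1},2X^{\beta})$ depends only on $\beta,\gamma,H$, not on the starting point $x$.
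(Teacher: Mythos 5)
Your proposal goes through the explicit formula and hybrid zero-density, which is the classical template for Bombieri--Vinogradov; but for \emph{this} Barban--Davenport--Halberstam mean square the paper takes a much cheaper route (large sieve alone), and there are two genuine gaps in your sketch that the cheaper route avoids.

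\emph{Small conductors.} You claim that for non-principal $\chi$ of conductor $r\le(\log X)^{C'}$ one has $\psi_\theta(x+H,\chi)-\psi_\theta(x,\chi)\ll H\exp(-c\sqrt{\log X})$ uniformly in $x\in[X,2X]$. No such individual short-interval cancellation is known for general $\theta\in(0,1)$; to truncate the explicit formula with error $\ll H(\log X)^{-O(1)}$ you need $T_0\gg X^{1-\theta}$, and then the classical zero-free region only gives $\beta\le 1-c/\log(rT_0)\asymp 1-c'/\log X$, hence $X^{\beta-1}$ is bounded away from $0$, not $\exp(-c\sqrt{\log X})$. Even on GRH one would only get $\psi_\theta(x+H,\chi)-\psi_\theta(x,\chi)\ll X^{1/2}\log^2 X$, which dwarfs $H$ for $\theta<1/2$. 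The lemma is stated for \emph{all} $\theta\in(0,1)$, so any bound per character is doomed; only an $L^{2}$ averaging over $\chi$ can succeed here, and this is exactly what the large sieve supplies.

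\emph{Large conductors.} Even granting the explicit formula with $T_0\asymp X^{1-\theta}(\log X)^{A+5}$, the Huxley hybrid zero-density exponent $c=12/5$ does not close the argument. With $Q^2T_0\asymp X^{2-\theta}$, the contribution of zeros at height $\sigma$ is $\ll (Q^2T_0)^{c(1-\sigma)}H^2X^{2\sigma-2}$, and the requirement that this be $\le HX$ rearranges to $(1-\sigma)\bigl[(2-\theta)c-2\bigr]\le 1-\theta$. With $c=12/5$ this fails in a middle band of $\sigma$: for instance, at $\theta=1/2$ and $\sigma=3/5$ the exponent of $X$ is $1.64>1.5$, and the deficit is a genuine positive power of $X$, which no choice of $B$ can recover. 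One would need the (unproven) density hypothesis $c=2$ to make this work uniformly in $\theta$. In addition, controlling the truncation term $O(X\log^2(rX)/T_0)$ already forces $T_0\gg X^{3/4-\theta/2}$ once it is squared and summed over $q\le Q$, further worsening $Q^2T_0$.

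\emph{What the paper does instead.} The principal character is handled by the trivial bound $|\psi(x+H)-\psi(x)-H|\ll H\log X$ (no PNT of any kind), which contributes $\ll (\log Q)H^2(\log X)^2=o(HX(\log X)^{1-A})$ since $H/X=X^{\theta-1}\to0$; your appeal to the Guth--Maynard short-interval PNT (valid only for $\theta>17/30$) is therefore unnecessary and would artificially restrict $\theta$. For all non-principal characters at once, the multiplicative large sieve with conductor-lifting gives
\[
\sum_{q\le Q}\frac{1}{\varphi(q)}\sum_{\chi\,(\mathrm{mod}\ q)}\Bigl|\sum_{n}a_n\chi(n)\Bigr|^{2}\ \ll\ (Q^{2}+H)(\log Q)\sum_{n}|a_n|^{2},
\]
and with $a_n=\Lambda(n)\mathbf 1_{(x,x+H]}(n)$, $\sum_n|a_n|^2\ll H\log X$, $Q^2\le X(\log X)^{-2B}$, this is $\ll HX(\log X)^{2-2B}+H^2(\log X)^2=o(HX(\log X)^{1-A})$ once $B$ is large. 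A separate elementary divisor-function bound handles prime powers. No zero-free region, no Siegel--Walfisz, and no zero-density estimate enter; the short window length is what makes the large-sieve bound decisive, and the argument is valid for every $\theta\in(0,1)$.
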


\begin{proof}
Fix $\theta\in(0,1)$ and $A>0$. For $x\in[X,2X]$ set
\[
H:=H(x):=\lfloor x^{\theta}\rfloor,\qquad Q:=X^{1/2}(\log X)^{-B},
\]
with $B=B(\theta,A)>0$ to be chosen later. For $(a,q)=1$ write
\[
\theta(y;q,a):=\sum_{\substack{p\le y\\ p\equiv a\, (\bmod q)}}\!\log p,\qquad \psi(y;q,a):=\sum_{\substack{n\le y\\ n\equiv a\, (\bmod q)}}\!\Lambda(n).
\]
Denote
\[
\mathcal S(x):=\sum_{q\le Q}\ \sum_{a\, (\bmod q)}\Big|\theta(x+H;q,a)-\theta(x;q,a)-\tfrac{H}{\varphi(q)}\Big|^2.
\]
We split $\mathcal S(x)$ into coprime and non-coprime residue classes:
\[
\mathcal S(x)=\mathcal S^{\!*}(x)+\mathcal S^{(0)}(x),\qquad \mathcal S^{\!*}(x):=\sum_{q\le Q}\ \sum_{\substack{a\, (\bmod q)\\ (a,q)=1}}\Big|\theta(x+H;q,a)-\theta(x;q,a)-\tfrac{H}{\varphi(q)}\Big|^2.
\]
For the non-coprime classes, if $(a,q)>1$ and $p\equiv a\pmod q$ is prime then $p\mid q$. Since $q\le Q\le X^{1/2}(\log X)^{-B}<x\le x+H$, there is no such $p\in[x,x+H]$, hence
\[
\theta(x+H;q,a)-\theta(x;q,a)=0.
\]
Therefore
\[
\mathcal S^{(0)}(x)=\sum_{q\le Q}\Big(q-\varphi(q)\Big)\Big(\tfrac{H}{\varphi(q)}\Big)^2\le H^2\sum_{q\le Q}\frac{q}{\varphi(q)^2}\ll H^2(\log Q)(\log\log Q)^2\ll H^2(\log X)(\log\log X)^2.
\]
Since $H/X=X^{\theta-1}\to0$, for large $X$ this implies
\[
\mathcal S^{(0)}(x)\le \tfrac14\,H X(\log X)^{1-A}.
\]

It remains to bound $\mathcal S^{\!*}(x)$. For $(a,q)=1$ define
\begin{align*}
\Psi_{q,a}(x)&:=\sum_{x<n\le x+H}\!\Lambda(n)\,1_{n\equiv a\, (\bmod q)}-\frac{H}{\varphi(q)},\\
\mathcal P_{q,a}(x)&:=\sum_{\substack{x<p^k\le x+H\\ k\ge2\\ p^k\equiv a\, (\bmod q)}}\!\log p.
\end{align*}
Since $\psi=\theta+$ (higher prime powers), for each $(a,q)=1$ we have the exact identity
\[
\theta(x+H;q,a)-\theta(x;q,a)-\frac{H}{\varphi(q)}=\Psi_{q,a}(x)-\mathcal P_{q,a}(x).
\]
Hence, by $|u-v|^2\le2(|u|^2+|v|^2)$,
\[
\mathcal S^{\!*}(x)\le 2\,\mathcal S_{\psi}(x)+2\,\mathcal S_{\rm pp}^{\!*}(x),
\]
where
\begin{align*}
\mathcal S_{\psi}(x)&:=\sum_{q\le Q}\ \sum_{\substack{a\, (\bmod q)\\ (a,q)=1}}|\Psi_{q,a}(x)|^2,\\
\mathcal S_{\rm pp}^{\!*}(x)&:=\sum_{q\le Q}\ \sum_{\substack{a\, (\bmod q)\\ (a,q)=1}}|\mathcal P_{q,a}(x)|^2.
\end{align*}
We now bound $\mathcal S_{\psi}$ and $\mathcal S_{\rm pp}^{\!*}$.

\smallskip
\noindent\emph{1) Bounding $\mathcal S_{\psi}(x)$.}
Using orthogonality on $(\mathbb Z/q\mathbb Z)^{\times}$,
\[
\sum_{\substack{a\, (\bmod q)\\ (a,q)=1}}|\Psi_{q,a}(x)|^2=\frac1{\varphi(q)}\sum_{\chi\, (\bmod q)}\Big|\sum_{x<n\le x+H}\!\Lambda(n)\chi(n)-H\,1_{\chi=\chi_0}\Big|^2.
\]
Split the character sum into non-principal and principal characters.

\emph{(a) Non-principal characters.}
Put $a_n:=\Lambda(n)\,1_{(x,\,x+H]}(n)$ and $N:=H$. Then
\[
\mathcal S_{\psi,\mathrm{npr}}(x)\le \sum_{q\le Q}\frac1{\varphi(q)}\sum_{\chi\, (\bmod q)}\Big|\sum_{n} a_n\chi(n)\Big|^2.
\]
We invoke the multiplicative large sieve in its standard primitive, weighted form together with the conductor-lifting to all characters (Montgomery–Vaughan, MNT I, Thm.\ 7.12): for any complex sequence $(a_n)$ supported on an interval of length $N$,
\[
\sum_{q\le Q}\frac1{\varphi(q)}\sum_{\chi\, (\bmod q)}\Big|\sum_{n} a_n\chi(n)\Big|^2\ \ll\ (Q^2+N)\,(\log Q)\,\sum_{n}|a_n|^2.
\]
Using $\sum_{x<n\le x+H}\!\Lambda(n)^2\ll H\log X$ uniformly in $x$, we obtain
\[
\mathcal S_{\psi,\mathrm{npr}}(x)\ \ll\ (Q^2+H)\,H\,(\log X)(\log Q).
\]
Choosing $B=B(\theta,A)$ sufficiently large so that $Q^2H(\log X)(\log Q)\le \tfrac1{16}HX(\log X)^{1-A}$, and noting that $H^2(\log X)(\log Q)\le \tfrac1{16}HX(\log X)^{1-A}$ for large $X$ (since $H/X\to0$), we deduce
\begin{equation}\label{eq:SpsiNPR}
\mathcal S_{\psi,\mathrm{npr}}(x)\le \tfrac18\,H X(\log X)^{1-A}.
\end{equation}

\emph{(b) Principal characters.}
For $\chi_0\,(\bmod q)$,
\begin{align*}
\sum_{x<n\le x+H}\!\Lambda(n)\chi_0(n)-H&=\sum_{x<n\le x+H}\!\Lambda(n)1_{(n,q)=1}-H\\
&=\underbrace{(\psi(x+H)-\psi(x)-H)}_{=:\,\Delta_{\psi}(x)}\ -\ \sum_{p\mid q}A_p(x),
\end{align*}
where $A_p(x):=\sum\limits_{\substack{x<p^k\le x+H\\ k\ge2}}\!\log p\ge0$. Hence, by $|u-v|^2\le2(|u|^2+|v|^2)$ and $\Sigma(Q):=\sum_{q\le Q}\varphi(q)^{-1}\ll\log Q$,
\begin{align*}
\sum_{q\le Q}\frac1{\varphi(q)}\Big|\sum_{x<n\le x+H}\!\Lambda(n)\chi_0(n)-H\Big|^2
&\le 2\Sigma(Q)|\Delta_{\psi}(x)|^2\ +\ 2\sum_{q\le Q}\frac1{\varphi(q)}\Big|\sum_{p\mid q}A_p(x)\Big|^2.
\end{align*}
For the first term, $|\Delta_{\psi}(x)|\le\sum_{x<n\le x+H}\!\Lambda(n)+H\ll H\log X$ gives
\[
2\Sigma(Q)|\Delta_{\psi}(x)|^2\ll H^2(\log X)^2\log Q\le \tfrac18\,H X(\log X)^{1-A}
\]
for all sufficiently large $X$. For the second term, since $A_p(x)\ge0$ we have uniformly in $q$,
\[
\Big|\sum_{p\mid q}A_p(x)\Big|\le \sum_{\substack{x<p^k\le x+H\\ k\ge2}}\!\log p\,=:R(x).
\]
Estimating prime powers in short intervals: for $k=2$, $\sum_{x<p^2\le x+H}\!\log p\ \ll\ \big(\tfrac{H}{\sqrt x}+1\big)\log X$, and for $k\ge3$, using $(x+H)^{1/k}-x^{1/k}\ll Hx^{1/k-1}$, the same bound holds. Hence $R(x)\ll \big(\tfrac{H}{\sqrt x}+1\big)\log X$, and
\[
\sum_{q\le Q}\frac1{\varphi(q)}\Big|\sum_{p\mid q}A_p(x)\Big|^2\ \ll\ \log Q\,\Big(\tfrac{H}{\sqrt x}+1\Big)^2(\log X)^2\ =\ o\!\big(HX(\log X)^{1-A}\big),
\]
uniformly for $x\in[X,2X]$. Consequently,
\begin{equation}\label{eq:SpsiPR}
\mathcal S_{\psi,\mathrm{pr}}(x)\le \tfrac18\,H X(\log X)^{1-A}.
\end{equation}

Combining \eqref{eq:SpsiNPR} and \eqref{eq:SpsiPR} gives
\begin{equation}\label{eq:SpsiFinal}
\mathcal S_{\psi}(x)\le \tfrac14\,H X(\log X)^{1-A}.
\end{equation}

\smallskip
\noindent\emph{2) Bounding the prime-power term $\mathcal S_{\rm pp}^{\!*}(x)$.}
Enlarging to all residue classes can only increase the sum, hence
\begin{align*}
\mathcal S_{\rm pp}^{\!*}(x)&\le \sum_{q\le Q}\sum_{a\, (\bmod q)}\Big|\sum_{\substack{x<p^k\le x+H\\ k\ge2\\ p^k\equiv a\, (\bmod q)}}\!\log p\Big|^2\\
&=\sum_{\substack{x<p^k\le x+H\\ k\ge2}}\sum_{\substack{x<p^{\ell}\le x+H\\ \ell\ge2}}(\log p)(\log p')\sum_{q\le Q}1_{p^k\equiv p^{\ell}\, (\bmod q)}.
\end{align*}
Splitting the diagonal and off-diagonal pairs $p^k=p^{\ell}$, $p^k\ne p^{\ell}$, and writing $\tau_Q(h):=|\{q\le Q:q\mid h\}|$, we have
\[
\mathcal S_{\rm pp}^{\!*}(x)\le Q\sum_{\substack{x<p^k\le x+H\\ k\ge2}}(\log p)^2\ +\ \sum_{\substack{x<p^k,\,p^{\ell}\le x+H\\ k,\ell\ge2\\ p^k\ne p^{\ell}}}(\log p)(\log p')\,\tau_Q(|p^k-p^{\ell}|).
\]
For the diagonal, using $(x+H)^{1/k}-x^{1/k}\ll Hx^{1/k-1}$ and summing over $k\ge2$,
\[
\sum_{\substack{x<p^k\le x+H\\ k\ge2}}(\log p)^2\ \ll\ \Big(\tfrac{H}{\sqrt x}+1\Big)(\log X)^2.
\]
Thus the diagonal contribution is $\ll Q\big(\tfrac{H}{\sqrt X}+1\big)(\log X)^2$. For the off-diagonal, $\tau_Q(h)\le d(h)\ll h^{o(1)}\ll X^{o(1)}$ and
\[
\sum_{\substack{x<p^k\le x+H\\ k\ge2}}\!\log p\ \ll\ \Big(\tfrac{H}{\sqrt x}+1\Big)\log X,
\]
so the off-diagonal is $\ll X^{o(1)}\big(\tfrac{H^2}{X}+1\big)(\log X)^2$. Therefore, for large $X$,
\begin{equation}\label{eq:Spp}
\mathcal S_{\rm pp}^{\!*}(x)\ll Q\frac{H}{\sqrt X}(\log X)^2+Q(\log X)^2+X^{o(1)}\frac{H^2}{X}(\log X)^2\le \tfrac14\,H X(\log X)^{1-A}.
\end{equation}
(Indeed, the three terms are respectively $\ll H(\log X)^{2-B}$, $\ll \sqrt X(\log X)^{2-B}$, and $\ll H^2 X^{-1+o(1)}(\log X)^2$, each $o\big(HX(\log X)^{1-A}\big)$ as $X\to\infty$.)

\smallskip
\noindent\emph{3) Conclusion.}
From $\mathcal S^{\!*}(x)\le 2\mathcal S_{\psi}(x)+2\mathcal S_{\rm pp}^{\!*}(x)$ together with \eqref{eq:SpsiFinal} and \eqref{eq:Spp}, and adding the non-coprime contribution, we obtain for all sufficiently large $X$ (once $B=B(\theta,A)$ is fixed) and all $x\in[X,2X]$,
\[
\sum_{q\le X^{1/2}(\log X)^{-B}}\ \sum_{a\, (\bmod q)}\Big|\theta(x+H(x);q,a)-\theta(x;q,a)-\tfrac{H(x)}{\varphi(q)}\Big|^2\ \ll_A\ H(x) X(\log X)^{1-A}.
\]
This completes the proof.
\end{proof}

\section{Further appendices}

\subsection*{Appendix B: No uniform-in-$Q$ lower bound at the conjectural variance size}
We note a simple observation ruling out a uniform (in $Q$) lower bound at the conjectural BDH variance size in short intervals.

\begin{proposition}\label{prop:no-uniform-Q}
Fix $\theta\in(0,1)$ and for $x\in[X,2X]$ set $H(x)=\lfloor x^\theta\rfloor$. There does not exist $B_1=B_1(\theta)>0$ such that, for all sufficiently large $X$, all $x\in[X,2X]$, and all $Q\le Q(X,B_1):=X^{1/2}(\log X)^{-B_1}$,
\[
\sum_{q\le Q}\ \sum_{a\ (\mathrm{mod}\ q)} \Big|\,\theta(x+H(x);q,a)-\theta(x;q,a)-\tfrac{H(x)}{\varphi(q)}\,\Big|^2\ \gg_{\theta}\ H(x)\,X\,\log\!\Big(\tfrac{X}{H(x)}\Big).
\]
In particular, a uniform-in-$Q$ lower bound of the conjectured variance size cannot hold.
\end{proposition}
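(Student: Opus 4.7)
The plan is to refute the alleged lower bound by exhibiting a \emph{single} $Q$ in the permitted range at which the left-hand side is provably much smaller than the claimed size. The decisive observation is that the hypothetical inequality is quantified over \emph{every} $Q\le X^{1/2}(\log X)^{-B_1}$, so it suffices to defeat it at any single such $Q$. I would take the trivial endpoint $Q=1$, which lies in the range for every fixed $B_1$ and all sufficiently large $X$.

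At $Q=1$ the double sum collapses to the single term $q=1$, $a\equiv 0\pmod 1$, with $\varphi(1)=1$ and $\theta(y;1,0)=\theta(y)$. Hence the left-hand side becomes
\[
\mathcal L(x):=\bigl|\theta(x+H(x))-\theta(x)-H(x)\bigr|^{2}.
\]
I would bound this by the crudest available estimate: there are at most $H(x)$ primes in $(x,x+H(x)]$, each $\le 3X$, so $0\le\theta(x+H)-\theta(x)\le H(x)\log(3X)$, which yields
\[
\mathcal L(x)\ \le\ \bigl(H(x)\log(3X)+H(x)\bigr)^{2}\ \ll\ H(x)^{2}(\log X)^{2}.
\]

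For fixed $\theta\in(0,1)$ and $x\asymp X$ one has $\log(X/H(x))=(1-\theta+o(1))\log X$, so the conjectured lower bound reads $\gg_{\theta}H(x)\,X\,\log X$. Combining with the previous display,
\[
\frac{\mathcal L(x)}{H(x)\,X\,\log(X/H(x))}\ \ll_{\theta}\ \frac{H(x)\log X}{X}\ \asymp\ X^{\theta-1}\log X\ =\ o_\theta(1)\qquad(X\to\infty),
\]
which contradicts any implied constant in $\gg_{\theta}$ once $X$ is large. Hence no $B_1=B_1(\theta)>0$ with the claimed property can exist.

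There is essentially no technical obstacle here: the heart of the proof is recognising that uniformity over all $Q$ down to $Q=1$ is fatal, since at $Q=1$ only the \emph{global} discrepancy $\theta(x+H)-\theta(x)-H$ appears, and this quantity is trivially $O(H\log X)$, whereas the conjectured variance size $H X\log(X/H)$ exceeds $(H\log X)^{2}$ by a factor of $X/(H(\log X)^{2})\to\infty$. What the proposition rules out is therefore the \emph{uniformity in $Q$}, not the Barban–Davenport–Halberstam philosophy itself; at $Q$ near the natural scale $X^{1/2}(\log X)^{-B}$, Lemma~\ref{lem:BDH} still gives the matching \emph{upper} bound.
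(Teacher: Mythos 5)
Your proof is correct, and it takes a genuinely different and more elementary route than the paper. The paper invokes Lemma~\ref{lem:BDH} (with $A=1$) to get the upper bound $S(x;Q)\ll_\theta H(x)X$ at the ``natural'' scale $Q_*\asymp X^{1/2}(\log X)^{-B_0}$, then uses monotonicity of $S(x;\cdot)$ and compares against the hypothetical lower bound to derive a contradiction by a factor of $\log X$. You instead observe that the hypothesis is quantified over \emph{all} $Q\le X^{1/2}(\log X)^{-B_1}$, including $Q=1$, where the left-hand side collapses to the single global term $\bigl|\theta(x+H)-\theta(x)-H\bigr|^2\ll H^2(\log X)^2$; since $H^2(\log X)^2/(HX\log X)\asymp X^{\theta-1}\log X\to 0$, the lower bound already fails there. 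Your argument requires nothing beyond the trivial Chebyshev-type bound and has the advantage of not relying on the appendix lemma. What the paper's route buys is the (implicit) stronger fact that the alleged lower bound fails not merely at trivial scales but all the way up to $Q\asymp X^{1/2}(\log X)^{-B}$, which is the philosophically meaningful regime for a Barban--Davenport--Halberstam variance statement; you note this distinction yourself in your closing remark. Either proof suffices for the proposition as stated.
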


\begin{proof}
Fix $\theta\in(0,1)$ and set $H(x)=\lfloor x^\theta\rfloor$. Put
\[
E(x;q,a):=\theta(x+H(x);q,a)-\theta(x;q,a)-\frac{H(x)}{\varphi(q)},\qquad
S(x;Q):=\sum_{q\le Q}\ \sum_{a\ (\mathrm{mod}\ q)} |E(x;q,a)|^2,
\]
and $Q(X,B):=X^{1/2}(\log X)^{-B}$.

By Lemma~\ref{lem:BDH} (taking $A=1$), there exists $B_0=B_0(\theta)>0$ such that, for all sufficiently large $X$ and all $x\in[X,2X]$,
\[
S\big(x;Q(X,B_0)\big)\ \ll_{\theta}\ H(x)\,X.
\]
Since $S(x;Q)$ is nondecreasing in $Q$, for every $Q\le Q(X,B_0)$ we also have
\[
S(x;Q)\ \le\ S\big(x;Q(X,B_0)\big)\ \ll_{\theta}\ H(x)\,X.
\]
Moreover, because $x\in[X,2X]$ and $H(x)=\lfloor x^\theta\rfloor$ with $\theta\in(0,1)$,
\[
\log\!\Big(\tfrac{X}{H(x)}\Big)=\log(X/x^\theta)+O(1)\asymp\ \log X.
\]

Assume for contradiction that there exists $B_1=B_1(\theta)>0$ such that, for all sufficiently large $X$, all $x\in[X,2X]$, and all $Q\le Q(X,B_1)$,
\[
S(x;Q)\ \gg_{\theta}\ H(x)\,X\,\log\!\Big(\tfrac{X}{H(x)}\Big).
\]
Fix such an $X$ and $x$, and set $Q_*:=\min\{Q(X,B_0),\,Q(X,B_1)\}$. Then $Q_*\le Q(X,B_1)$, so by the assumed uniform lower bound,
\[
S(x;Q_*)\ \gg_{\theta}\ H(x)\,X\,\log\!\Big(\tfrac{X}{H(x)}\Big),
\]
whereas $Q_*\le Q(X,B_0)$, so by the variance bound and monotonicity,
\[
S(x;Q_*)\ \ll_{\theta}\ H(x)\,X.
\]
Using $\log(X/H(x))\asymp\log X$ and dividing the two bounds yields
\[
\frac{S(x;Q_*)}{H(x)\,X\,\log\!\big(\tfrac{X}{H(x)}\big)}\ \ll_{\theta}\ (\log X)^{-1}\ \to\ 0\qquad (X\to\infty),
\]
which contradicts the asserted lower bound. Therefore no such $B_1$ exists.
\end{proof}

\subsection*{Appendix C: A simple Chebotarev obstruction below $\theta=\tfrac12$}
The following elementary lower bound (by the ``empty class'' trick) shows that average-in-$q$ error terms of size $H/( \log X)^{A+1}$ cannot hold for almost all $x$ when $\theta<\tfrac12$.

\begin{proposition}\label{prop:cheb}
Fix a finite Galois extension $L/\mathbb{Q}$ with Galois group $G$ and a conjugacy class $\mathcal{C}\subset G$, and let $\delta_{\mathcal C}>0$ be its Chebotarev density. For any $\theta\in(0,1/2)$ there exists a constant $c_{\theta,\mathcal C}>0$ such that for every $B\ge 0$ and all sufficiently large $X$, uniformly for all $x\in[X,2X]$, with $H(x):=\lfloor x^{\theta}\rfloor$ and $Q:=X^{1/2}(\log X)^{-B}$, one has
\[
 \sum_{q\le Q}\max_{(a,q)=1}\Big|\#\{x<p\le x+H(x):\ p\equiv a\ (\mathrm{mod}\ q),\ \mathrm{Frob}_p\in\mathcal{C}\}\ 
-\ \frac{\delta_{\mathcal C}}{\varphi(q)}\cdot\frac{H(x)}{\log X}\Big|\ \ge\ c_{\theta, \mathcal C}\,\frac{H(x)}{\log X}.
\]
In particular, no bound of size $H(x)/(\log X)^{A+1}$ can hold for almost all $x$ when $\theta\in(0,1/2)$.
\end{proposition}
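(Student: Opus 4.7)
The plan is to implement the ``empty class'' trick suggested by the statement: restrict the outer sum to primes $q$ slightly larger than $H(x)$, so that pigeonhole forces at least one reduced residue class modulo $q$ to contain no integer of $[x,x+H(x)]$ at all; for any such class the counted prime set is empty and the ``error'' is exactly the full main term.

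First I would observe that for $\theta\in(0,1/2)$ and all sufficiently large $X$, the prime range $(H(x),Q]$ is wide, uniformly in $x\in[X,2X]$ and in $B\ge 0$. Indeed $H(x)=X^{\theta+o(1)}$ while $Q=X^{1/2}(\log X)^{-B}=X^{1/2-o(1)}$, so $\log Q-\log H(x)\ge(\tfrac12-\theta)\log X-O(\log\log X)\to\infty$. For each prime $q\in(H(x),Q]$ the interval $[x,x+H(x)]$ contains at most $H(x)+1<q$ integers, and hence at most $H(x)+1$ residue classes modulo $q$ are hit; in particular at least $\varphi(q)-H(x)-1\ge 1$ of the $\varphi(q)=q-1$ reduced classes are ``empty''. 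Picking any such empty class $a_q$, the counted set $\#\{x<p\le x+H(x):p\equiv a_q\,(\mathrm{mod}\ q),\ \mathrm{Frob}_p\in\mathcal C\}$ is zero, so the inner maximum at $q$ is at least $\delta_{\mathcal C}H(x)/(\varphi(q)\log X)$.

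Dropping the contribution from $q\le H(x)$, I would then bound the full sum below by
$$
\frac{\delta_{\mathcal C}\,H(x)}{\log X}\sum_{\substack{q\text{ prime}\\ H(x)<q\le Q}}\frac{1}{q-1}.
$$
By Mertens' theorem $\sum_{p\le Y}1/p=\log\log Y+M+o(1)$, together with $\log H(x)=\theta\log X+O(1)$ and $\log Q=\tfrac12\log X-B\log\log X$, the inner sum equals $\log\log Q-\log\log H(x)+o(1)=\log(1/(2\theta))+o(1)$, a positive constant depending only on $\theta$ since $\theta<1/2$. Thus $c_{\theta,\mathcal C}:=\tfrac12\delta_{\mathcal C}\log(1/(2\theta))$ will work, uniformly in $x\in[X,2X]$ and in $B\ge 0$, for all $X$ large enough.

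The only real obstacle is bookkeeping the uniformities, which turns out to be automatic: $B$ enters only through the additive correction $-B\log\log X$ to $\log Q$, which is absorbed in $o(\log X)$, so the Mertens gap $\log(1/(2\theta))$ survives for every fixed $B\ge 0$. No deep arithmetic input from Chebotarev is needed beyond the positivity of $\delta_{\mathcal C}$, which enters only to scale the putative main term; the argument is purely combinatorial on the short-interval side.
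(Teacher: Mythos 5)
Your proposal is correct and is essentially the same argument as the paper's: restrict to primes $q$ just above $H(x)$, use pigeonhole to find a reduced class whose error term equals the full main term $\delta_{\mathcal C}H(x)/(\varphi(q)\log X)$, and sum via Mertens to extract $\log(1/(2\theta))$. The only cosmetic differences are that the paper frames it by contradiction and selects a reduced class avoiding the counted primes rather than a totally empty class (which lets it start at $q>H+1$ rather than needing $q\ge H+3$), but neither affects the Mertens sum or the final constant.
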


\begin{proof}
Fix $\theta\in(0,1/2)$ and $B\ge 0$, and set $H:=\lfloor x^{\theta}\rfloor$ and $Q:=X^{1/2}(\log X)^{-B}$. Let
\[
S(x;Q,H):=\sum_{q\le Q}\max_{(a,q)=1}\Big|\#\{x<p\le x+H: p\equiv a\ (\mathrm{mod}\ q),\ \mathrm{Frob}_p\in\mathcal C\}-\frac{\delta_{\mathcal C}}{\varphi(q)}\cdot\frac{H}{\log X}\Big|.
\]
We will show that for all sufficiently large $X$ (so that $H+1\le Q$, which holds since $\theta<1/2$), uniformly for $x\in[X,2X]$,
\[
S(x;Q,H)\ \ge\ c_{\theta,\mathcal C}\,\frac{H}{\log X}
\]
with, say, $c_{\theta,\mathcal C}:=\tfrac{\delta_{\mathcal C}}{4}\log\tfrac{1}{2\theta}$. Suppose, for contradiction, that there exist arbitrarily large $X$ and some $x\in[X,2X]$ with
\[
S(x;Q,H)\ <\ c_{\theta,\mathcal C}\,\frac{H}{\log X}.
\]
Let $M:=\#\{x<p\le x+H: \mathrm{Frob}_p\in\mathcal C\}$; then $0\le M\le H$. For any prime modulus $q$ with $H+1<q\le Q$, we have $q\le Q\le X^{1/2}<p$ for all primes $p\in(x,x+H]$, so $(p,q)=1$ and each such $p$ lies in some reduced residue class modulo $q$. Among the $\varphi(q)=q-1$ reduced classes, at most $M\le H<q-1$ are occupied by these primes, so there exists a reduced class $a_q\pmod q$ containing none of them. Hence
\[
\#\{x<p\le x+H: p\equiv a_q\ (\mathrm{mod}\ q),\ \mathrm{Frob}_p\in\mathcal C\}=0,
\]
and therefore
\[
\max_{(a,q)=1}\Big|\#\{x<p\le x+H: p\equiv a\ (\mathrm{mod}\ q),\ \mathrm{Frob}_p\in\mathcal C\}-\frac{\delta_{\mathcal C}}{\varphi(q)}\cdot\frac{H}{\log X}\Big|
\ \ge\ \frac{\delta_{\mathcal C}}{\varphi(q)}\cdot\frac{H}{\log X}.
\]
Summing this over primes $q$ with $H+1<q\le Q$ and using $1/(q-1)\ge 1/(2q)$ for $q\ge 3$, we obtain
\[
S(x;Q,H)\ \ge\ \frac{\delta_{\mathcal C}H}{2\log X}\sum_{\substack{H+1<q\le Q\\ q\ \text{prime}}}\frac{1}{q}.
\]
By Mertens’ theorem for primes, uniformly for $x\in[X,2X]$,
\[
\sum_{\substack{H+1<q\le Q\\ q\ \text{prime}}}\frac{1}{q}
=\log\log Q-\log\log(H+1)+o(1)
=\log\Big(\frac{1}{2\theta}\Big)+o(1), \quad X\to\infty.
\]
Hence, for all sufficiently large $X$,
\[
S(x;Q,H)\ \ge\ \frac{\delta_{\mathcal C}H}{2\log X}\cdot\frac{1}{2}\log\Big(\frac{1}{2\theta}\Big)
= c_{\theta,\mathcal C}\,\frac{H}{\log X},
\]
contradicting the assumption. Therefore the stated lower bound holds uniformly for all $x\in[X,2X]$ once $X$ is large enough.
\end{proof}

\end{document}